\newtheorem{example}{Example}
\numberwithin{example}{section}
\newtheorem{remark}{Remark}
\numberwithin{remark}{section}
\DeclarePairedDelimiter\floor{\lfloor}{\rfloor}
\newcommand*{\Scale}[2][4]{\scalebox{#1}{$#2$}}%
\newcommand{\reals}{\mathbb{R}}
\newcommand{\set}[1]{\{#1\}}
\newcommand{\abs}[1]{\lvert#1\rvert}
\newcommand{\semi}[1]{\lvert#1\rvert}
\newcommand{\norm}[1]{\lVert#1\rVert}
\newcommand{\nhalf}{\Scale[0.5]{-\tfrac{1}{2}}}
\newcommand{\dual}[1]{#1^{\prime}}
\newcommand{\Div}[1]{\nabla\cdot #1}
\renewcommand{\brack}[1]{\langle#1\rangle}
\newcommand{\Zpolar}{Z^0}
\newcommand{\Zperp}{Z^\perp}
\newcommand{\inv}[1]{\ensuremath{{#1}^{-1}}}
\newcommand{\tr}[1]{\ensuremath{{\text{tr}{#1}}}}
\newcommand{\transp}[1]{\ensuremath{{#1}^{\Scale[0.5]{\top}}}}
\newcommand{\la}[1]{\mathbf{#1}}
\DeclareMathOperator{\spn}{span}
\renewcommand{\inf}{\mathop{\@inf\vphantom{\@sup}}}
\renewcommand{\sup}{\mathop{\@sup\vphantom{\@inf}}}
\newcommand{\@inf}{\operatorname*{inf}}
\newcommand{\@sup}{\operatorname*{sup}}
\title{On the Singular Neumann Problem in Linear Elasticity
\thanks{
  submitted to Numerical Linear Algebra with Applications.
}
}
\author{
Miroslav Kuchta\footnotemark[2] \and
Kent-Andre Mardal\footnotemark[2]\footnotemark[3] \and
Mikael Mortensen\footnotemark[2]
}
\begin{document}
\renewcommand{\thefootnote}{\fnsymbol{footnote}}
\footnotetext[2]{Department of Mathematics, Division of Mechanics, University of
Oslo; \texttt{\{mirok, kent-and, mikaem\}@math.uio.no}}
\footnotetext[3]{Center for Biomedical Computing, Simula Research Laboratory}
\renewcommand{\thefootnote}{\arabic{footnote}}

\maketitle

\begin{abstract}
The Neumann problem of linear elasticity is singular with a kernel formed by the 
rigid motions of the body. There are several tricks that are commonly used to obtain a non-singular linear system. However, they often cause reduced accuracy or 
lead to poor convergence of the iterative solvers.
In this paper, \deleted{four} different well-posed formulations of the problem are studied 
through discretization by the finite element method, and preconditioning strategies 
based on operator preconditioning are discussed.
For each formulation we derive 
preconditioners that are independent of the discretization parameter. Preconditioners that 
  are robust with respect to the first 
Lam\'{e} constant are constructed for the pure displacement formulations, while 
a preconditioner that is robust in both Lam\'{e} constants is constructed for the 
mixed formulation.
It is shown that, for convergence in the first Sobolev norm, it is crucial to 
respect the orthogonality constraint derived from the continuous problem. Based on 
this observation a modification to the conjugate gradient method is proposed that
achieves optimal error convergence of the computed solution.
\end{abstract}

\begin{keywords}
  linear elasticity; rigid motions; singular problems; preconditioning; conjugate gradient
\end{keywords}

\pagestyle{myheadings}
\thispagestyle{plain}
\markboth{On the Singular Neumann Problem in Linear Elasticity}{}

\vspace{-6pt}

\section{Introduction}\label{sec:intro}
The presented paper discusses numerical techniques for solving the singular
problem of linear elasticity. Let $\Omega\subset\mathbb{R}^3$ be the body subjected 
to volume forces $f:\Omega\rightarrow\reals^3$ and surface forces
$h:\partial\Omega\rightarrow\reals^3$. The body's displacement $u:\Omega\rightarrow\reals^3$ 
is then found as a solution to
\begin{equation}
  \label{eq:strong}
  \begin{aligned}
-&\nabla\cdot\sigma(u) = f                                      &\mbox{ in }\Omega,\\
 &\sigma(u) = 2\mu\epsilon(u) + \lambda(\nabla\cdot u){I}&\mbox{ in }\Omega,\\
 &\sigma(u)\cdot n = h                                         &\mbox{ on }\Gamma_N = \partial\Omega,\\
  \end{aligned}
\end{equation}
with $\mu>0$, $\lambda\geq 0$ the Lam\'{e} constants of the material, ${I}$ the 
identity matrix, $\epsilon(u)=\tfrac{1}{2}\left(\nabla u + \transp{\left(\nabla u\right)}\right)$ 
the strain and $n$ the outward-pointing surface normal, see \cite{marsden}.
We note that the constitutive law for the stress tensor $\sigma$ can be equivalently stated
as $\sigma(u)=2\mu\epsilon(u) + \added[id=rev2, remark={comment 1}]{\lambda}\tr{(\epsilon(u))}{I}$ where $\tr{(\epsilon(u))}$
denotes the trace of $\epsilon(u)$, i.e. the sum of its diagonal.

%
%
The system is used extensively in structural analysis \cite{Bauchau2009}, and is 
relevant in numerous applications e.g., marine engineering \cite{DNV}, 
biomechanics of brain \cite{brain}, spine \cite{spine} or the mechanics of 
planetary bodies \cite{OC}.

Due to the absence of a Dirichlet boundary condition that can anchor the body (coordinate system) 
in space, the solution can be uniquely determined if and only 
if the net force and the net torque on $\Omega$ are zero, i.e., the forces $f$, $h$ satisfy the 
compatibility conditions
\begin{equation}
  \label{eq:compat}
  \begin{aligned}
    &\int_{\Omega} f\,\mathrm{d}x + \int_{\partial\Omega} h\,\mathrm{d}s &= 0,\\
    &\int_{\Omega} f\times x \,\mathrm{d}x + \int_{\partial\Omega} h\times x \,\mathrm{d}s &= 0.
  \end{aligned}
\end{equation}
With such compatible data the now solvable \eqref{eq:strong} is singular as any 
rigid motion can be added to the solution. We note that the space of rigid
motions $z:\Omega\rightarrow\reals^3$ such that $\epsilon(z)=0$, consists of 
translations and rigid rotations and for a body in $3d$ the space is 
six-dimensional.

The ambiguity of the solution of \eqref{eq:strong} can be removed by adding
constraints by means of Lagrange multipliers which enforce that the solution is
free of rigid motions. When discretized, this approach yields an invertible 
saddle point system. Alternatively, discretizing \eqref{eq:strong} directly
leads to a symmetric, positive semi-definite matrix with a six dimensional kernel. 
Singular systems may be solved by iterative methods if care is taken 
to handle the kernel during the iterations, but  a common approach (here termed \textit{pinpointing}) 
in engineering literature, e.g. \cite{DNV}, is to remove the nullspace by 
prescribing the displacement in selected points of $\partial{\Omega}$. 

\added[id=rev1, remark={comment2}]{
If $\Gamma_N\neq \partial \Omega$ and a Dirichlet boundary condition is
prescribed on $\partial\Omega\setminus\Gamma_N$ the equations of linear elasticity
are well posed (e.g. \cite[ch. 6.3]{braess}) and there exists a number of
efficient solution algorithms for the problem. Here we discuss some of the methods
for which the Neumann problem \eqref{eq:strong}, or more precisely, correct treatment
of the rigid motions, is relevant.}

\added[id=rev1, remark={comment2}]{
In the context of algebraic multigrid (AMG) it is recognized already in
the early work of Ruge and St{\" u}ben \cite{ruge1987algebraic} that carefully
constructed interpolators are need to obtain good convergence for problems
stemming from equations of linear elasticity (PDE systems in general). In particular,
the authors observe that with the so called ``unknown'' approach convergence of AMG
deteriorates when then number of Dirichlet boundaries decreases. The issue here is
that with the ``unknown'' approach only the translations are interpolated well
on the coarse grid, cf. \cite{baker2010improving}, and as a remedy the authors
propose to improve the interpolation of rotations (eigenvectors with small
eigenvalues in general). Griebel et al. \cite{griebel2003algebraic} construct a
block-interpolation where the rotations are captured exactly if the underlying
grid is point-symmetric. However, this assumption fails to hold at the boundary
nodes and AMG becomes less effective as the number of Neumann boundaries
increases. More recently \cite{baker2010improving} discusses computationally efficient
techniques for augmenting a given/existing AMG interpolator to ensure exact
interpolation of rigid motions (nullspace vectors in general). A related
approach is \cite{vassilevski2006multiple} who derive algorithms for constructing
AMG interpolation operators which exactly interpolate any given set of
vectors. The requirement that the coarse space captures rigid motions is also
found in the later variants of AMG. For example, in smoothed aggregation AMG
\cite{vanvek1996algebraic, mandel1999energy} the coarse basis functions are
constructed from a (global) constrained minimization problem where preservation of
the nullspace is one of the constraints. The minimization problems solved
in construction of AMG based on element interpolation \cite{brezina2001algebraic,
  jones2001amge, henson2001element} uses rigid motions of the local stiffness
matrices. Similarly, the kernel of local stiffness matrices is preserved 
by the approximate splittings in AMG based on computational
molecules \cite{kraus2008algebraic, karer2010algebraic}. To complete our
(non-exhaustive) list let us mention that the in the domain decomposition
methods, e.g. FETI \cite{farhat1991method}, the Neumann problem \eqref{eq:strong}
arises naturally on ``floating'' subdomains that do not intersect the Dirichlet
boundaries. Here, the local singular problem is treated algebraically by
pseudoinverse (cf. the discussion in \S \ref{sec:krylov}).}

\added[id=rev1, remark={comment1}]{
  In the following we aim to solve \eqref{eq:strong} with the finite element method (FEM)
  while using Krylov methods for the resulting linear systems. As the systems
  are singular the Krylov solvers are initialized with the nullspace of
  rigid motions (typically in the form of the $l^2$ orthonormal set of vectors).
  In the standard implementation} \footnote{See e.g.\\
  \url{http://www.mcs.anl.gov/petsc/petsc-current/docs/manualpages/KSP/KSPSolve.html}}
\added[id=rev1, remark={comment1}]{
  the Krylov methods employ the same ($l^2$) projection to orthogonalize both
  the right hand side as well as the solution vector with respect to the given
  nullspace. A particular question that we address here is then whether these
  algorithms provide discrete approximations which converge to the weak
  solution of \eqref{eq:strong} in the $H^1$ norm. We shall see that, in general,
  the answer is negative and that the issue stems from the fact that in the context
  of FEM a vector in $\reals^n$ can be associated with a \emph{function} from the
  finite dimensional finite element space $V_h\subset H^1$, i.e. it represents a
  solution/left hand side, as well as with the \emph{functional} from the corresponding dual space,
  that is, it is a representation of the right hand side. Consequently two
  projectors are required in iterative method originating from a singular variational
  problem. However, standard implementations of Krylov methods, which employ
  single projection, fail to make the distinction.}


\added[id=rev1, remark={comment1}]{
%
Rewriting the Krylov solvers to take the two representations into account is in principle a simple addition
to the code. However, it is also intrusive and to the best of our knowledge this distinction is 
not implemented in state-of-the-art linear algebra frameworks such as 
PETSc\cite{petsc} or Hypre\cite{hypre}. Here, we therefore propose a simple
alternative solution which is less intrusive.
To this end, we focus on analysis of the Lagrange multiplier method
and the conjugate gradient (CG) method for the singular problem \eqref{eq:strong}.
Well-posedness of both the methods is discussed and robust preconditioners are 
established based on operator preconditioning \cite{kent}. Further, connections 
between the two methods and the question of whether they yield identically 
converging numerical solutions are elucidated. These methods rely on standard iterative
solvers as they implicitly contain the two required projectors. 
}

The manuscript is structured as follows. In \S \ref{sec:prelim} the necessary notation
is introduced and shortcomings of pinpointing and CG are illustrated by
numerical examples. Section \ref{sec:lagrange} discusses Lagrange multiplier
formulation and two preconditioners for the method. Section \ref{sec:krylov} deals 
with the preconditioned CG method and two preconditioners are proposed. Further, 
it is revealed that if the continuous origin of the discrete problem is
ignored, the method, in general, will not yield convergent solutions. A continuous variational 
setting is introduced to modify the CG to yield a convergent method. Section \S \ref{sec:energy} 
discusses well-posedness and preconditioning of an alternative formulation of 
\eqref{eq:strong}. The proposed formulation leads to a symmetric, positive
definite linear system. In \S\ref{sec:lagrange}-\S\ref{sec:energy} we assume that 
$\lambda$ and $\mu$ are of comparable magnitude in order to put the focus on proper 
handling of the rigid motions.  In \S \ref{sec:mixed} we consider the case where 
$\lambda \gg \mu$. The focus here is on a well-known and simple technique to remove 
the problems of locking, namely the mixed formulation of linear elasticity where
an extra unknown, the \textit{solid pressure} is introduced. \added{We discuss
  two formulations which yield }robust approximation and preconditioning in $\lambda$ when 
care is taken of proper handling of the rigid motions. Finally, conclusions are drawn 
in \S \ref{sec:fin}.

\section{Preliminaries}\label{sec:prelim}
Let $V$ be the Sobolev space of 
vector (or scalar or tensor) valued functions, which, together with their weak derivatives
of order one, are in space $L^2(\Omega)$. We denote by 
$(\cdot, \cdot)$ the $L^2(\Omega)$ inner product of functions in $V$ while $\norm{\cdot}$ is the 
corresponding norm.
%
For the $L^2$ inner product over boundary $\partial\Omega$ we shall use the notation
$(\cdot, \cdot)_{\partial\Omega}$.
The standard inner product of $V$ is $(u, v)_1=(u, v)+(\nabla u, \nabla v)$, $u, v\in V$ and 
$\norm{\cdot}_1$ shall be the induced norm. For any Hilbert space $V$ its dual 
space is denoted as $\dual{V}$ and we use capital or calligraphy letters to 
denote operators, e.g. $A:V\rightarrow\dual{V}$ or $\mathcal{A}:(V\times
V)\rightarrow\dual{(V\times V)}$. Finally, $\brack{\cdot, \cdot}$ is the duality 
pairing between $\dual{V}$ and $V$.

The space $\reals^n$ is considered with the $l^2$ inner product $\transp{{x}}{y}=x_iy_i$ 
(invoking the summation convention), ${x}, {y}\in\reals^n$ and the norm 
$\semi{{x}}=\sqrt{\transp{{x}}{x}}$. For clarity of notation bold fonts 
are used to denote vectors and operators(matrices) in $\reals^n$ that are
representations of functions and operators from finite dimensional finite element approximation 
space $V_h\subset V$. Let $\set{\phi_i}_{i=1}^{n}$ be the nodal basis of $V_h$. The 
representations are obtained by mappings $\pi_h:V_h\rightarrow\reals^n$ (the nodal interpolant) 
and $\mu_h:\dual{V}_h\rightarrow\reals^n$ such that for $v\in V_h$, $f\in\dual{V}_h$
\begin{equation}\label{eq:pimu}
  v=(\pi_h v)_i\phi_i
  \quad\mbox{ and }\quad
  (\mu_h f)_i = \brack{f, \phi_i}.
\end{equation}
We refer to \cite[ch 6.]{kent} for a detailed discussion of the properties of the
mappings, e.g. invertibility, and note here that $M:V_h\rightarrow \dual{V_h}$ 
is represented by a matrix $\la{M}=\mu_h M \inv{\pi_h}$. In particular, the mass
matrix $\la{M}$, $M_{ij}=(\phi_j, \phi_i)$ represents the Riesz map with respect to 
the $L^2$-inner product, $\brack{Mu, v}=(u, v)$, $u \in V_h$. 
On the other
hand the duality pairing between $\dual{V_h}$ and $V_h$ is represented by the $l^2$
inner product $\brack{f, v}=\transp{\la{f}}\la{v}$, $\la{f}=\mu_h f$, $\la{v}=\pi_h v$. 
We remark that for $V_h$ set up on a sequence of non-uniformly refined triangulations 
of $\Omega$, the $l^2$ inner product $\transp{\la{u}}\la{v}$ where $\la{v}=\pi_h v$, 
$\la{u}=\pi_h u$ may not provide a converging approximation of $(u, v)$ and the 
distinction between the two becomes crucial for the construction of converging methods.

Finally, Korn's inequalities on $V=\left[ H^1(\Omega) \right]^3$ and 
$\Zperp=\set{v\in V; (v, z)=0\,\forall z\in Z}$, $Z=\set{v\in V; \epsilon(v)=0}$ are 
invoked, see \cite[thm 2.1]{ciarlet} and \cite[thm 2.3]{ciarlet}. There exist a positive 
constant $C=C(\Omega)$ such that 
\begin{equation}\label{eq:kornV}
  C\norm{u}^2_1\leq\norm{\epsilon(u)}^2+\norm{u}^2 \quad \forall u\in V.
\end{equation}
There exists a positive constant $C=C(\Omega)$ such that
\begin{equation}\label{eq:kornZ}
  C\norm{u}^2_1 \leq \norm{\epsilon(u)}^2\quad \forall u\in\Zperp.
\end{equation}

\replaced{To motivate out investigations and illustrate the lack of $H^1$
  convergence that pinpointing or standard CG can lead to, we present three
numerical examples.}{To motivate our investigations, we present three numerical examples which
discuss performance of CG and pinpointing for solving \eqref{eq:strong}}. That
the pinpointing can be a suitable method for treating a singular problem is 
shown in the first example which considers the Poisson problem with Neumann boundary 
conditions. However, pinpointing does not work well with \eqref{eq:strong} as
the second example shows. In the third example, the 
singular elasticity problem is finally solved with preconditioned CG. \deleted{The 
employed preconditioner ignores the rigid motions leading to lack of convergence 
and unbounded iterations.}


Bochev and Lehoucq \cite{bochev} report an increase in iteration count due to 
pinpointing for a CG method without a preconditioner in the context of singular Poisson
problem. However, Krylov methods are in practice rarely applied without a preconditioner. 
For this reason, Example \ref{ex:pin_poisson} solves the singular Poisson problem 
in two and three dimensions by means of pinpointing and a preconditioned CG.
\deleted{We will see that the preconditioned method yields convergent numerical solutions without 
increasing the iteration count.}
\begin{example}\label{ex:pin_poisson} We consider $\Omega=\left[0, 1\right]^d$, 
$d=2, 3$ and the singular Poisson equation
\[
  \begin{aligned}
    -&\Delta u = f         &\mbox{ in }\Omega,\\
     &\nabla u\cdot n = 0  &\mbox{ on }\partial\Omega,
  \end{aligned}
\]
with unique exact solution obtained by subtracting its 
mean value $\inv{\abs{\Omega}}\int_{\Omega} u\,\mathrm{d}x$ from a manufactured $u$. 
The value of the exact solution is prescribed as a constraint for the degree of 
  freedom at the (bottom) lower left corner of the domain, which is triangulated 
  such that the computational mesh is refined towards the origin.

  To discretize the system continuous linear Lagrange
  elements\footnote{Unless stated otherwise continuous linear Lagrange elements
  ($P_1$) are used to discretize all the presented numerical examples.} from the FEniCS 
library \cite{fenics, fenics_old} were used. The resulting linear system was solved by
the preconditioned CG method implemented in the PETSc library \cite{petsc}, using 
HypreAMG \cite{hypre} to compute the action of the preconditioner.
\added[id=rev1, remark={comment 4}]{More specifically we used a single $V$
  cycle with one pre and post smoothing by a symmetric-SOR smoother. The
  other AMG parameters were kept at their default settings, e.g. \emph{classical} interpolation,
  \emph{Falgout} coarsening. 
}
\footnote{The settings for AMG were reused throughout all the
    numerical experiments presented in the paper.
  }
The iterations were started from a random initial guess and a relative preconditioned
residual magnitude of $10^{-11}$ was required for convergence.

The number of iterations together with error and convergence rate based on the $H^1$ 
norm are reported in Table \ref{tab:poisson}. Pinpointing yields numerical solutions 
$u_h$ that converge at optimal rate.
%
Moreover, the number of iterations is
  bounded. Unlike in \cite{bochev} where specifying the solution datum in single point 
  was found to lead to increasing number of \textit{unpreconditioned} CG iterations 
  (both in 2$d$ and 3$d$) we find here that \textit{preconditioned} CG with the system 
  modified by pinpointing is a suitable numerical method for the singular Poisson
  problem.

\begin{table}[ht!]
  \begin{center}
  \caption{Convergence of the pinpointing approach for the singular Poisson
    problem.}
\footnotesize{
\begin{tabular}{llc|llc}
\hline
\multicolumn{3}{c|}{$d=2$} & \multicolumn{3}{c}{$d=3$}\\
\hline
 size & $\norm{u-u_h}_1$ & \# & size & $\norm{u-u_h}_1$ & \# \\
\hline
  40849   & 2.49E-01 (1.00) & 11 & 12347   & 2.72E+00 (1.22) & 10\\
  162593  & 1.25E-01 (1.00) & 11 & 92685   & 1.36E+00 (1.01) & 11\\
  648769  & 6.23E-02 (1.00) & 11 & 718649  & 6.78E-01 (1.00) & 12\\
  2591873 & 3.11E-02 (1.00) & 12 & 5660913 & 3.39E-01 (1.00) & 12\\
\hline
\end{tabular}
}
\label{tab:poisson}
\end{center}
\end{table}
\end{example}

Following the performance of pinpointing in the singular Poisson problem, the same 
approach is now applied to \eqref{eq:strong} in Example \ref{ex:pin_elasticity}.
\deleted{Here, we will observe that fixing the solution datum in vertices of the mesh leads 
to slightly increased iteration counts. More importantly, we will see that the method in
general does not yield converging solutions.}
\begin{example}\label{ex:pin_elasticity} We consider the singular elasticity problem
  \eqref{eq:strong} with $\mu=384$, $\lambda=577$ and $\Omega$ obtained by rigid 
  deformation of the box $\left[-\tfrac{1}{4}, \tfrac{1}{4}\right]\times \left[-\tfrac{1}{2}, \tfrac{1}{2}\right]\times\left[-\tfrac{1}{8}, \tfrac{1}{8}\right]$. 
The box was first rotated around $x$, $y$ and $z$ axes by angles $\tfrac{\pi}{2}$, $\tfrac{\pi}{4}$ 
and $\tfrac{\pi}{5}$ respectively. Afterwards it was translated by the vector $(0.1, 0.2, 0.3)$.
%
  Starting from 
  $u^{*}=\tfrac{1}{4}(\sin{\tfrac{\pi}{4}x}, z^3, -y)$ the unique solution $u$ of \eqref{eq:strong}
  is constructed by orthogonalizing $u^{*}$ with respect to the rigid motions of $\Omega$, where 
  the orthogonality is enforced in the $L^2$ inner product, while the right hand side $f$ is
  manufactured by adding to $-\nabla\cdot\sigma(u)$ a linear combination of rigid motions.
  Finally, we take $\sigma(u)\cdot n$ as the surface force $h$.
  The solution is pictured in Figure \ref{fig:uexact}. We note that in this example 
  a uniform triangulation is used.

To obtain from \eqref{eq:strong} an invertible linear system, the exact
displacement was prescribed in four different ways, cf. Table
\ref{tab:pin_elasticity} below. ($3\circ$) constrains six degrees of freedom in 
  three corners of the body such that in $i$-th corner there are $i$ components 
  prescribed. This choice is motivated by the dimensionality of the space of
  rigid motions, cf. \cite{DNV}. The fact that fixing three points in space is sufficient to prevent
the body from rigid motions motivates ($1\triangleright$) where all three components of 
displacement are prescribed on vertices of a single triangular element on $\partial\Omega$. 
However, with mesh size decreasing this constraint effectively becomes a
constraint for a single (mid)point. Thus in ($3\triangleright$) the displacement
in three arbitrary triangles is fixed. Finally in ($3\bullet$) the displacement 
is prescribed in three corners of the body. 

The iterative solver used the same tolerances and parameters as in Example \ref{ex:pin_poisson}. 
In particular, identical settings of the multigrid preconditioner were utilized and
the iterations were started from random initial vector. \added{We note that AMG
was \emph{not} initialized with the rigid motions.}

The number of iterations together with error and convergence rates based on the $H^1$ 
norm are reported in Table \ref{tab:pin_elasticity}. Note that all the
considered pinpointing strategies lead to moderately increased iteration counts.
The increase is most notable for ($1\triangleright$), which effectively constrains 
  a single point as the mesh is refined. On the other hand, strategies ($3\triangleright$) 
  and ($3\bullet$), that always constrain all three components of the displacement 
  in at least three points, yield the slowest growth rates. However, neither strategy yields
convergent numerical solutions. In fact, the numerical error can often be seen to 
  increase with resolution. 

\begin{table}[ht!]
  \begin{center}
  \caption{Convergence of the pinpointing approach for the singular elasticity
    problem.}
\footnotesize{
\begin{tabular}{l|lc|lc|lc|lc}
\hline
\multirow{2}{*}{size} & \multicolumn{2}{c|}{$3\circ$}
                      & \multicolumn{2}{c|}{$1\triangleright$}
                      & \multicolumn{2}{c|}{$3\triangleright$} 
                      & \multicolumn{2}{c}{$3\bullet$}\\
  \cline{2-9}
& $\norm{u-u_h}_1$ & \# & $\norm{u-u_h}_1$ & \# & $\norm{u-u_h}_1$ & \# & $\norm{u-u_h}_1$ & \#\\
\hline
2187   & 6.69E-02 (-0.02) & 30 & 1.01E-01 (-0.70) & 32 & 2.82E-02 (0.88)  & 24 & 2.89E-02 (0.99)  & 25\\
14739  & 1.27E-01 (-0.92) & 35 & 9.61E-01 (-3.25) & 40 & 1.08E-02 (1.38)  & 28 & 1.35E-02 (1.10)  & 29\\
107811 & 2.57E-01 (-1.02) & 36 & 7.89E+00 (-3.04) & 48 & 1.72E-02 (-0.66) & 31 & 1.08E-02 (0.31)  & 32\\
823875 & 5.17E-01 (-1.01) & 41 & 6.36E+01 (-3.01) & 54 & 3.96E-02 (-1.21) & 33 & 1.82E-02 (-0.75) & 35\\
\hline
\end{tabular}
}
\label{tab:pin_elasticity}
\end{center}
\end{table}
\end{example}
\begin{figure}
  \begin{center}
  \includegraphics[width=0.5\textwidth]{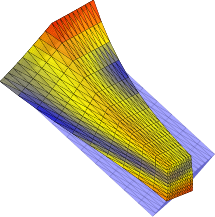}
  \end{center}
  \caption{Computational domain (blue) deformed by exaggerated(4x) analytical 
  displacement used in the numerical examples. The deformed body is colored
  by the magnitude of the displacement.
  }
  \label{fig:uexact}
\end{figure}

In the final example a preconditioned CG method will be applied to solve the 
singular elasticity problem with data such that the compatibility conditions 
\eqref{eq:compat} are met. \deleted{Based on whether or not the components of the kernel are 
removed from the converged vector, we will see that the method yields convergent/divergent 
numerical solutions. We will also see that the iteration counts are not bounded.}

\begin{example}\label{ex:krylov_AM}
    We consider a modified problem from Example \ref{ex:pin_elasticity} where
    $f$ is not perturbed by rigid motions. As the data satisfy \eqref{eq:compat}, 
    the discrete linear system is solvable and amenable to solution by the preconditioned 
    CG method. To this end the rigid motions are passed to the conjugate gradient
    solver via the PETSc interface
    \footnote{See \emph{MatSetNullSpace}\\
    \url{http://www.mcs.anl.gov/petsc/petsc-3.5/docs/manualpages/Mat/MatSetNullSpace.html}
    }.
    The mass or identity matrix is added to the singular system 
    matrix in order to obtain a positive definite matrix in the construction of the
    preconditioner based on AMG. The first choice can be viewed as a simple mean
    to get an invertible system while the motivation for the latter is the
    functional setting to be discussed later in Theorem \ref{thm:h1_norm}. 
  Moreover, for each preconditioner two cases are considered where the converged 
  vector is either postprocessed by removing from it the components of the nullspace or
  no postprocessing is applied. We note that in this example the iterations are started 
  from a zero initial vector and the relative tolerance of $10^{-10}$ is used as a convergence
  criterion.
  The number of iterations together with error and convergence rates based on the $H^1$ 
  norm are reported in Table \ref{tab:krylov_AM}. We observe that the method with
  the mass matrix (cf. left pane of the table) yields convergent solutions only if 
  postprocessing is applied. On the other hand solutions with the preconditioner 
  based on the identity matrix converge in the $H^1$ norm even if no postprocessing is
  used. The observation that the \added{Krylov iterations/}preconditioners respectively do and do not introduce 
  rigid motions (recall that the initial guess and right hand side are orthogonal 
  to the kernel) is related to properties of the added matrices. A vector
  free of rigid motions remains orthogonal after applying to it the identity matrix.
  This property in general does not hold for the (not diagonal) mass matrix.
  \deleted{However, neither of the preconditioners leads to bounded iteration counts.}

\begin{table}[ht!]
  \begin{center}
  \caption{Convergence of the preconditioned CG method for the singular elasticity
    problem. Positive definite preconditioners using respectively the mass and
    identity matrices to get a nonsingular system are considered. The maximum
    number of iterations is set to 150. The iterations are unbounded in both
    cases. Solutions due to preconditioner using the identity matrix converge at
    nearly optimal rate.}
\footnotesize{
  \begin{center}
\begin{tabular}{l|lc|lc|lc|lc}
\hline
  \multirow{3}{*}{size} & \multicolumn{4}{c|}{AMG($\la{A}+\la{M}$)} 
                        & \multicolumn{4}{c}{AMG($\la{A}+\la{I}$)}\\
  \cline{2-9}
  & \multicolumn{2}{c}{kernel not removed} & \multicolumn{2}{c|}{kernel removed} & 
\multicolumn{2}{c|}{kernel not removed} & \multicolumn{2}{c}{kernel removed} \\
  \cline{2-9}
& $\norm{u-u_h}_1$ & \# & $\norm{u-u_h}_1$ & \# & $\norm{u-u_h}_1$ & \# & $\norm{u-u_h}_1$ & \#\\
\hline
  2187    & 1.97E-02(0.26)  & 17  & 5.08E-03(0.99) & 17  & 1.11E-02(0.87)  & 21 & 5.08E-03(0.99)  & 21\\
  14739   & 2.58E-02(-0.39) & 19  & 2.29E-03(1.15) & 19  & 2.87E-03(1.95)  & 35 & 2.29E-03(1.15)  & 35\\
  107811  & 2.80E-02(-0.12) & 34  & 1.06E-03(1.11) & 34  & 1.21E-03(1.24)  & 81 & 1.06E-03(1.11)  & 81\\
  823875  & 2.82E-02(-0.01) & 53  & 5.12E-04(1.04) & 54  & 6.32E-04(0.94)  &
  $>$150 & 5.12E-04(1.04)  & $>$150\\
\hline
\end{tabular}
  \end{center}
}
\label{tab:krylov_AM}
\end{center}
\end{table}
\end{example}

Examples \ref{ex:pin_poisson}--\ref{ex:krylov_AM} have illustrated some of the
issues that might be encountered when solving the singular problem \eqref{eq:strong} 
with the finite element method. In particular, the following questions may be posed: 
(i) What is the cause of the poor convergence properties of pinpointing? (ii) What should be 
the order optimal preconditioner for CG? (iii) What should be the order optimal preconditioner 
for the Lagrange multiplier formulation? 

With questions (ii) and (iii) answered in detail in the remainder of the
text let us briefly comment on the first question. As will become apparent, the singular 
problem with a known kernel, such as \eqref{eq:strong}, 
possesses all the information necessary to formulate a well-posed problem and a 
convergent numerical method. In this sense, coming up with a datum to be prescribed 
in the pinpointed nodes is theoretically redundant, but usually required for implementation. 
Further, as pointed out in \cite{bochev} there are stability issues with prescribing 
point values of $H^1$ functions for $d\geq2$. However, we have not explored 
settings of HypreAMG \added{or other realizations of the preconditioner} that
could potentially improve convergence properties of the method in Example \ref{ex:pin_elasticity}.
\added{In this sense the two level preconditioner of \cite{vanek1999two}
  is interesting as the proposed method results in bounded CG iterations even
  with the variationaly problematic point boundary conditions.}

\section{Lagrange multiplier formulation}\label{sec:lagrange}
Let $Z\subset V=\left[H^1(\Omega)\right]^3$ denote the space of rigid motions of 
$\Omega$, For compatible data a unique solution $u$ of \eqref{eq:strong} 
is required to be linearly independent of functions in $Z$. To this end a Lagrange 
multiplier ${p}\in Z$ is introduced which enforces orthogonality of $u$ with respect to $Z$. 
The constrained variational formulation of
\eqref{eq:strong} seeks $u\in V, {p}\in Z$ such that
\footnote{Note that $(h, v)_{\partial\Omega}$ stands for the integral
$\int_{\partial\Omega} h \cdot v \,\mathrm{d}s$.}

\begin{equation}\label{eq:weak_long}
  \begin{aligned}
    &2\mu(\epsilon(u), \epsilon(v))+\lambda(\nabla\cdot u, \nabla\cdot v) - (p, v) = (f, v) +
    (h, v)_{\partial\Omega} \quad &\forall v\in V,\\ 
    -&(u, q)                         = 0 \quad &\forall{q}\in Z.
  \end{aligned}
\end{equation}
Equation \eqref{eq:weak_long} defines a saddle point problem for $(u, p)\in W$, $W=V\times Z$ 
satisfying
\begin{equation}\label{eq:weak}
  \mathcal{A}\begin{pmatrix}u\\p\end{pmatrix}=
\begin{pmatrix}
  A & B\\
  \dual{B} & 
\end{pmatrix}
\begin{pmatrix}u\\p\end{pmatrix}
=
\begin{pmatrix}l\\ 0 \end{pmatrix}
\end{equation}
where $l\in\dual{V}$ such that $\brack{l, v}=(f, v)+(h, v)_{\partial\Omega}$ 
and operators 
$A:V\rightarrow\dual{V}$, $B:Z\rightarrow\dual{V}$ are defined in terms of bilinear
forms
\begin{equation}\label{eq:form_def}
  a(u, v) = 2\mu(\epsilon(u), \epsilon(v))+\lambda(\nabla\cdot u, \nabla\cdot v)
  \quad\text{and}\quad
  b(u, q)=(u, q)
\end{equation}
as $\brack{Au, v}=a(u, v)$ and 
 $\brack{Bq, u}=-b(u, q)$. 
We note that in \eqref{eq:weak} operator $\dual{B}$ is the adjoint of $B$.

Existence and uniqueness of the solution to \eqref{eq:weak} follows from the Brezzi
theory \cite{brezzi}, see also \cite[ch 3.4]{braess}. The proof shall utilize the inequalities 
given in Lemma~\ref{lm:help}.
\begin{lemma} \label{lm:help}
  Let $u\in V$ arbitrary and $\omega(u)$ be the skew symmetric part of the displacement gradient $\nabla u$,
  \added[id=rev2, remark={comment 2}]{i.e. $\omega(u)=\tfrac{1}{2}((\nabla u)-\transp{(\nabla u)})$.} 
  Then
  \begin{subequations}
    \begin{align}
      &\norm{\epsilon(u)}\leq\norm{\nabla u} \quad\text{and}\quad\norm{\omega(u)}\leq\norm{\nabla u}, \label{eq:help1}\\
&\norm{\nabla\cdot u}\leq\sqrt{3}\norm{\nabla u},\label{eq:help2}\\
      &\exists C=C(\Omega)\mbox{ such that }\norm{z}_1\leq C\norm{z}\quad \forall
      z\in Z. \label{eq:help3}
  \end{align}
  \end{subequations}
  \begin{proof}
    Inequality \eqref{eq:help1} follows from the orthogonal decomposition 
    $\nabla u=\epsilon(u)+\omega(u)$. Inequality \eqref{eq:help2} follows by direct
    calculations. To establish the final inequality we first note that \eqref{eq:help3}
    clearly holds for rigid motions that are translations with constant $C=1$. To verify it for rigid
    rotations we consider the representation $z=Sx$ for some arbitrary skew-symmetric matrix 
    $S\in\reals^{3\times 3}$. Then 
    by definition $\omega(Sx)=S$ so that $(\omega(z),
    \omega(z))=\semi{S}^2\semi{\Omega}$ \added[id=rev2, remark={comment 3}]{with $\semi{S}=\sqrt{\tr{(\transp{S}S)}}$ the Frobenius
    norm.} In turn 
    \begin{equation}\label{eq:inertia_c}
 \norm{z}^2=(Sx, Sx)=\semi{S}^2(x, x)=\frac{(x, x)}{\semi{\Omega}}(\omega(z), \omega(z))=
      c(\Omega)\norm{\nabla z}^2, \quad c(\Omega)=\frac{(x, x)}{\semi{\Omega}}
  \end{equation}
  as $\epsilon(z)=0$. Therefore \eqref{eq:help3} holds for all rotations. We remark that 
    the constant $c$ in \eqref{eq:inertia_c} is related to the moment of inertia of the body. 
    Finally the statement follows with a constant $C(\Omega)=\sqrt{1+c(\Omega)}$ from 
    the decomposition of any $z\in Z$ into translations and rotations.
  \end{proof}
\end{lemma}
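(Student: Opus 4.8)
The plan is to establish the three estimates separately, each reducing to pointwise linear algebra followed by integration over $\Omega$, with only the last one requiring a genuine geometric input.

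For \eqref{eq:help1} I would use the pointwise splitting $\nabla u=\epsilon(u)+\omega(u)$ of the displacement gradient into its symmetric and skew-symmetric parts. The Frobenius inner product of a symmetric matrix with a skew-symmetric matrix vanishes, so this decomposition is orthogonal at \emph{every} point of $\Omega$; integrating the pointwise Pythagorean identity gives $\norm{\nabla u}^2=\norm{\epsilon(u)}^2+\norm{\omega(u)}^2$, and discarding either of the two nonnegative terms yields the two claimed bounds.

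For \eqref{eq:help2} I would note $\nabla\cdot u=\tr{(\nabla u)}=\sum_{i=1}^{3}\partial_i u_i$, so by the Cauchy--Schwarz inequality in $\reals^3$ one has the pointwise estimate $\abs{\nabla\cdot u}^2\leq 3\sum_{i=1}^3\abs{\partial_i u_i}^2\leq 3\,\semi{\nabla u}^2$, where $\semi{\nabla u}$ denotes the pointwise Frobenius norm; integrating over $\Omega$ and taking square roots produces the constant $\sqrt{3}$.

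The only non-routine part is \eqref{eq:help3}. The quickest route is to invoke that $Z$ is a finite-dimensional subspace of $V$ (six-dimensional in three space dimensions), on which all norms are equivalent, so in particular $\norm{z}_1\leq C\norm{z}$ for $z\in Z$ with $C$ depending only on $\Omega$. If an explicit constant is desired, I would instead decompose $z$ into its translational part $t$ and its rotational part $r=Sx$ with $S\in\reals^{3\times 3}$ skew-symmetric: for $t$ one has $\nabla t\equiv 0$, hence $\norm{t}_1=\norm{t}$; for $r$ one has $\nabla r\equiv S$ constant, so $\norm{\nabla r}^2=\semi{S}^2\semi{\Omega}$, while $\norm{r}^2=\int_\Omega\abs{Sx}^2\,\mathrm{d}x$ is a quadratic form in the entries of $S$ that is positive definite as soon as $\Omega$ has positive measure and is not contained in a line through the origin, giving $\norm{\nabla r}^2\leq c(\Omega)^{-1}\norm{r}^2$ with $c(\Omega)$ tied to the moments of inertia of $\Omega$. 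Since the projections $z\mapsto t$ and $z\mapsto r$ are bounded in the $L^2$ norm (again by finite-dimensionality of $Z$), combining the two estimates yields \eqref{eq:help3}. I expect the main obstacle to be exactly this last point: verifying that the rotational quadratic form admits a strictly positive lower bound depending only on $\Omega$; the remainder is elementary.
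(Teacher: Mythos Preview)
Your proposal is correct and follows essentially the same approach as the paper's proof: the orthogonal symmetric/skew-symmetric decomposition for \eqref{eq:help1}, direct pointwise computation for \eqref{eq:help2}, and the translation/rotation splitting for \eqref{eq:help3}. You are in fact more careful than the paper in treating $\int_\Omega\abs{Sx}^2\,\mathrm{d}x$ as a positive-definite quadratic form in $S$ rather than asserting the identity $(Sx,Sx)=\semi{S}^2(x,x)$, and your finite-dimensionality shortcut for \eqref{eq:help3} is a legitimate alternative the paper does not mention.
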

\begin{theorem}\label{thm:h1_norm} Let $f, h$ such that $l\in\dual{V}$. Then there 
exists a unique solution $u\in V$, ${p}\in Z$ of \eqref{eq:weak}. 
\begin{proof}
  We proceed by establishing the Brezzi constants. First, the bilinear form $a$
  is shown to be bounded with respect to the $\norm{\cdot}_1$. Indeed, by
  Cauchy-Schwarz inequality and inequalities \eqref{eq:help1}, \eqref{eq:help2} 
  we have for any $u, v\in V$
  \[
\begin{split}
a(u, v) = 2\mu(\epsilon(u), \epsilon(v)) + \lambda(\Div{u}, \Div{v})
  &\leq 2\mu\norm{\epsilon(u)}\norm{\epsilon(u)}+\lambda\norm{\Div{u}}\norm{\Div{v}}\\
  &\leq (2\mu+3\lambda)\norm{\nabla v}\norm{\nabla u}
  \leq \alpha^{*} \norm{u}_1\norm{v}_1
\end{split}
  \]
with $\alpha^{*}=2\lambda+3\mu$.
  Ellipticity of $a$ on 
  $\Zperp=\set{v\in V; (v, z)=0\,\forall z\in Z}=\set{v\in V; b(v,
  {p})=0\,\forall {p}\in Z}$
  follows from Korn's inequality \eqref{eq:kornZ}. Since $\lambda\geq 0$ by assumption
  \[
    a(u, u) = 2\mu\norm{\epsilon(u)}^2 + \lambda\norm{\Div u}^2\geq
  2\mu\norm{\epsilon(u)}^2 \geq \alpha_*\norm{u}^2_1\quad \forall u\in\Zperp,
  \]
with $\alpha_*=2\mu C$ and $C=C(\Omega)$ the constant from \eqref{eq:kornZ}.
Boundedness of $b$ with a constant $\beta^*=1$ follows from the Cauchy-Schwarz inequality. 
Finally, using \eqref{eq:help3} we have for arbitrary $p\in Z$
  \[
\sup_{v\in V} \frac{b(v, {p})}{\norm{v}_1} 
  \geq
  \frac{(p, p)}{\norm{p}_1}
  \geq
  \frac{\norm{p}^2}{C\norm{p}}
  =\frac{1}{C}\norm{p}
  \]
  so that the inf-sup condition holds with $\beta_*=C^{-1}$ with $C$ the constant from
  \eqref{eq:help3}.
\end{proof}
\end{theorem}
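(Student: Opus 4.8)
The plan is to appeal to the Brezzi theory for saddle point problems (see \cite{brezzi}, or \cite[ch.~3.4]{braess}): the system \eqref{eq:weak} has a unique solution once we exhibit four constants, namely boundedness of $a$ on $V$, ellipticity of $a$ on the kernel of $b$, boundedness of $b$, and an inf-sup condition for $b$. So the proof reduces to producing these constants, and Lemma~\ref{lm:help} together with the two Korn inequalities \eqref{eq:kornV}--\eqref{eq:kornZ} are precisely the tools designed for this.

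For boundedness of $a$ I would apply the Cauchy--Schwarz inequality to each term of $a(u,v)=2\mu(\epsilon(u),\epsilon(v))+\lambda(\Div u,\Div v)$ and then bound $\norm{\epsilon(u)}$, $\norm{\epsilon(v)}$ and $\norm{\Div u}$, $\norm{\Div v}$ by $\norm{\nabla u}$, $\norm{\nabla v}$ using \eqref{eq:help1} and \eqref{eq:help2}; this gives $a(u,v)\le(2\mu+3\lambda)\norm{\nabla u}\norm{\nabla v}\le\alpha^{*}\norm{u}_1\norm{v}_1$ with $\alpha^{*}$ depending only on the Lam\'e constants. Boundedness of $b(u,q)=(u,q)$ with constant $1$ is immediate from Cauchy--Schwarz. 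For ellipticity I would first note that the kernel of $b$ is exactly $\Zperp$, since $b(v,p)=(v,p)$ vanishes for all $p\in Z$ precisely when $v\in\Zperp$. On this space the second Korn inequality \eqref{eq:kornZ} yields $C\norm{u}_1^2\le\norm{\epsilon(u)}^2$, and since $\lambda\ge0$ we get $a(u,u)=2\mu\norm{\epsilon(u)}^2+\lambda\norm{\Div u}^2\ge2\mu\norm{\epsilon(u)}^2\ge2\mu C\norm{u}_1^2$, so $a$ is elliptic on $\Zperp$ with $\alpha_*=2\mu C$.

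The inf-sup condition is where the particular structure of the problem has to be exploited. Since $Z\subset V$, the natural move is to test $b(\cdot,p)$ against $v=p$ itself, so that $\sup_{v\in V}b(v,p)/\norm{v}_1\ge(p,p)/\norm{p}_1$; the remaining task is to replace $\norm{p}_1$ in the denominator by $\norm{p}$, which is exactly what inequality \eqref{eq:help3} of Lemma~\ref{lm:help} delivers, giving $\sup_{v\in V}b(v,p)/\norm{v}_1\ge\norm{p}/C$ and hence $\beta_*=C^{-1}$ (with $p$ measured in the $L^2$ norm, equivalent on the finite-dimensional space $Z$ to $\norm{\cdot}_1$). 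I expect the main obstacle to be analytic rather than algebraic: the whole argument rests on the second Korn inequality for the coercivity step and on the control $\norm{z}_1\le C\norm{z}$ of rigid motions for the inf-sup step — both isolated in the preliminary results — after which Brezzi's theorem closes the proof.
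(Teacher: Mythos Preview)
Your proposal is correct and follows essentially the same route as the paper: both verify the four Brezzi conditions using Cauchy--Schwarz together with \eqref{eq:help1}--\eqref{eq:help2} for boundedness of $a$, Korn's inequality \eqref{eq:kornZ} for ellipticity on $\Zperp$, Cauchy--Schwarz for boundedness of $b$, and the test choice $v=p$ combined with \eqref{eq:help3} for the inf-sup condition, arriving at the same constants $\alpha^*, \alpha_*=2\mu C, \beta^*=1, \beta_*=C^{-1}$.
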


We remark that Theorem \ref{thm:h1_norm} implies that the operator
$\mathcal{A}:W\rightarrow \dual{W}$ from \eqref{eq:weak} is an isomorphism. In 
particular, conditions \eqref{eq:compat} need not to hold for there to exist a 
unique solution of \eqref{eq:weak_long}.

In order to find the solution of the well-posed \eqref{eq:weak} numerically,
conditions from Theorem \ref{thm:h1_norm} must hold with discrete subspaces
$V_h$, $Z_h$, see \cite{fortin} or \cite[ch 3.4]{braess}. Typically, satisfying the 
discrete inf-sup condition presents an issue and requires 
choice of compatible finite element discretization of the involved spaces, e.g. 
Taylor-Hood or MINI elements \cite{arnold} for the Stokes equations. For the 
conforming discretization $V_h\subset V$, $Z_h=Z$ the following result shows that 
the discrete inf-sup condition holds.
\begin{theorem}\label{thm:dlbb} Let $Z_h=Z$, $V_h\subset V$ and $b$ the 
  bilinear form defined in \eqref{eq:form_def}. Then there is a constant 
  $\beta_*$ independent of $h$ such that
  $\inf_{p\in Z_h} \sup_{v\in V_h} \tfrac{b(v, {p})}{\norm{v}_1 \norm{p}} \geq \beta_{*}$.
\end{theorem}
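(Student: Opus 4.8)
The plan is to exploit the fact that $Z_h = Z$ is a fixed, finite-dimensional space that sits inside $V_h$, so the supremum over $V_h$ can be bounded below simply by testing with $v = p$ itself. First I would observe that since $Z \subset V_h$, for any $p \in Z_h = Z$ the choice $v = p$ is admissible in the supremum, giving
\[
\sup_{v\in V_h} \frac{b(v, p)}{\norm{v}_1 \norm{p}} \geq \frac{b(p, p)}{\norm{p}_1 \norm{p}} = \frac{(p, p)}{\norm{p}_1 \norm{p}} = \frac{\norm{p}^2}{\norm{p}_1 \norm{p}} = \frac{\norm{p}}{\norm{p}_1}.
\]
This is exactly the same test-function trick used in the proof of Theorem~\ref{thm:h1_norm}, and the point is that it goes through verbatim at the discrete level precisely because the multiplier space is not discretized.

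Next I would invoke inequality \eqref{eq:help3} from Lemma~\ref{lm:help}, which gives a constant $C = C(\Omega)$, depending only on the domain (through its moment of inertia), with $\norm{z}_1 \leq C\norm{z}$ for all $z \in Z$. Applying this with $z = p$ yields $\norm{p}/\norm{p}_1 \geq 1/C$, and hence
\[
\inf_{p\in Z_h} \sup_{v\in V_h} \frac{b(v, p)}{\norm{v}_1 \norm{p}} \geq \frac{1}{C} =: \beta_*,
\]
which is independent of $h$ since $C$ depends only on $\Omega$. That completes the argument.

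There is essentially no main obstacle here: the whole content is the structural remark that with $Z_h = Z$ the inf-sup supremum can be saturated by a test function lying in the multiplier space, so the continuous inf-sup bound transfers to the discrete setting with the identical constant. The only thing to be careful about is to state explicitly that $Z \subset V_h$ — this requires that the finite element space $V_h$ actually contain the rigid motions, which for the standard conforming (e.g. $P_1$ or higher Lagrange) vector-valued spaces on $\Omega$ is automatic since rigid motions are affine functions and are therefore represented exactly. I would add a sentence to that effect so the hypothesis $V_h \subset V$ together with exact representability of $Z$ is made transparent, and then the two displays above constitute the entire proof.
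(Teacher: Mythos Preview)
Your proof is correct and follows essentially the same approach as the paper: both take $v=p\in Z=Z_h\subset V_h$ in the supremum and then apply inequality~\eqref{eq:help3} to obtain $\beta_*=1/C$. Your additional remark that $Z\subset V_h$ holds because rigid motions are affine and hence exactly represented in standard conforming Lagrange spaces is a useful clarification that the paper leaves implicit.
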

  
\begin{proof} The proof mirrors the continuous inf-sup condition in Theorem \ref{thm:h1_norm}. 
  Let $p\in Z_h$ be given. Since $Z=Z_h\subset V_h$ we get by taking $v=p$
  \[
\sup_{v\in V_h} \frac{b(v, {p})}{\norm{v}_1} 
  \geq
  \frac{(p, p)}{\norm{p}_1}
  \geq
  \frac{\norm{p}^2}{C\norm{p}}
  =\frac{1}{C}\norm{p},
  \]
where $C$ is the constant from \eqref{eq:help3}.
\end{proof}

Following Theorems \ref{thm:h1_norm}, \ref{thm:dlbb} and operator preconditioning 
\cite{kent, malek} the Riesz map $\mathcal{B}_1:\dual{W}\rightarrow W$ with 
respect to inner product $(u, v)_1+(p, q)$ with $(u, p), (v, q)\in W$ 
\begin{equation}\label{eq:B1}
  \mathcal{B}_1 = \inv{\begin{pmatrix}
                  H & \\
                    & I
                  \end{pmatrix}},
  \quad
  H:V\rightarrow\dual{V}, \brack{Hu, v}=(u, v)_1
  \quad\text{and}\quad
  I:Z\rightarrow \dual{Z}, \brack{Ip, q}=(p, q)
\end{equation}
defines a preconditioner for discretized \eqref{eq:weak} whose condition number 
is independent of $h$. This follows from Brezzi constants in Theorems 
\ref{thm:h1_norm}, \ref{thm:dlbb} being free of the discretization parameter. 

  Since applying the preconditioner \eqref{eq:B1} requires an inverse of the $6\times 6$ 
  mass matrix of the space rigid motions it is advantageous to chose a basis of $Z$ 
  in which the matrix is well-conditioned. With the choice of an $L^2$ orthonormal 
  basis the obtained mass matrix is an identity and we shall therefore briefly discuss
  construction of such a basis.

\subsection{Construction for orthonormal basis of rigid motions}\label{sec:make_rm}
Consider a unit cube $\Omega=\left[-\tfrac{1}{2}, \tfrac{1}{2}\right]^3$
centered at the origin. Denoting ${e}_i$, $i=1, 2, 3$ the canonical unit vectors 
the set
\[
  Z_{\mbox{\mancube}} = \set{{e}_1, {e}_2, {e}_3,
                              x\times{e}_1, x\times{e}_2, x\times{e}_3}
\]
constitutes an orthonormal basis of the rigid motions of $\Omega$ with respect to 
the $L^2$ inner product. Clearly, the basis for an arbitrary body can be obtained from 
$Z_{\mbox{\mancube}}$ by a Gram-Schmidt process. However, we shall advocate here a 
construction derived from physical considerations. The construction was
originally presented by the authors in \cite{mekit_rm}.

\begin{lemma}\label{lm:basis} Let ${c}=\inv{\semi{\Omega}}(x, 1)$ be the center of mass of
  $\Omega$, $I_{\Omega}$ the tensor of inertia \cite[ch 4.]{gurtin} of $\Omega$ 
  with respect to $c$ 
  \[
I_{\Omega}=\int_{\Omega} I\transp{(x-{c})}(x-{c})+(x-{c})\otimes(x-{c})\,\mathrm{d}x
  \]
and $(\lambda_i, {v}_i)$, $i=1, 2, 3$ the eigenpairs of the tensor. Then the 
  set
  \begin{equation}\label{eq:basis}
    Z_{\Omega} = \set{
\semi{\Omega}^{\nhalf}v_1, \semi{\Omega}^{\nhalf}v_2, \semi{\Omega}^{\nhalf}v_3,
\lambda_1^{\nhalf}(x-{c})\times v_1, \lambda_2^{\nhalf}(x-{c})\times v_2,
\lambda_3^{\nhalf}(x-{c})\times v_3}
  \end{equation}
is the $L^2$ orthonormal basis of rigid motions of $\Omega$.
\begin{proof}
Note that by construction $I_{\Omega}$ is a symmetric positive definite 
tensor. Thus $\lambda_i>0$ and there exists a complete set of eigenvectors 
$\transp{v_i}v_j=\delta_{ij}$. We proceed to show that the Gram matrix of the 
proposed basis is an identity. First $(v_i, v_j)=\semi{\Omega}\delta_{ij}$ by 
orthonormality of the eigenvectors. Further, for $((x-c)\times v_i, v_j)=(v_i\times v_j, (x-c))$ 
and in the nontrivial case $i\neq j$ the product is zero since $c$ is the center of mass. 
Finally $((x-{c})\times v_i, (x-{c})\times v_j)=\transp{v_i}I_{\Omega}v_j=\lambda_i\delta_{ij}$.
\end{proof}
\end{lemma}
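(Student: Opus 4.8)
The plan is to verify directly that the Gram matrix of the proposed set \eqref{eq:basis} with respect to the $L^2$ inner product is the $6\times 6$ identity; since the set has exactly six elements and the space of rigid motions is six-dimensional, orthonormality already forces it to be a basis. I would organize the six elements into a $3+3$ block structure: the first three are scaled eigenvectors $v_i$ of the inertia tensor (the ``translation block''), the last three are scaled vectors $(x-c)\times v_i$ (the ``rotation block''). Correspondingly the Gram matrix splits into a translation--translation block, a rotation--rotation block, and a translation--rotation cross block, and I would treat each in turn.

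First I would record the preliminary facts: $I_\Omega$ is symmetric and positive definite by construction (it is an integral of $I\,|x-c|^2 + (x-c)\otimes(x-c)$, each integrand being symmetric positive semidefinite, with the first term strictly positive on a set of positive measure), so the eigenvalues $\lambda_i$ are strictly positive and the scalings $\lambda_i^{\nhalf}$ make sense, and one may pick the eigenvectors to satisfy $\transp{v_i}v_j=\delta_{ij}$. Next, the translation block: $(v_i,v_j)=\semi{\Omega}\,\transp{v_i}v_j=\semi{\Omega}\delta_{ij}$ because $v_i$ is a constant vector, so scaling by $\semi{\Omega}^{\nhalf}$ normalizes it. For the cross block, $((x-c)\times v_i,\,v_j)=\int_\Omega (x-c)\cdot(v_i\times v_j)\,\mathrm{d}x$ after using the scalar-triple-product identity; when $i=j$ this vanishes since $v_i\times v_i=0$, and when $i\neq j$ the vector $v_i\times v_j$ is constant and comes out of the integral, leaving $\big(\int_\Omega(x-c)\,\mathrm{d}x\big)\cdot(v_i\times v_j)=0$ because $c=\inv{\semi{\Omega}}(x,1)$ is precisely the center of mass. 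Finally, the rotation block: using $(a\times u)\cdot(a\times w) = (u\cdot w)(a\cdot a) - (u\cdot a)(w\cdot a)$ with $a=x-c$, one gets $((x-c)\times v_i,(x-c)\times v_j) = \transp{v_i}\!\left(\int_\Omega I\,|x-c|^2 - (x-c)\otimes(x-c)\,\mathrm{d}x\right)\!v_j$; this is not quite $\transp{v_i} I_\Omega v_j$ because of the sign on the outer-product term, so here I must be careful about the convention in the stated definition of $I_\Omega$ and reconcile it (either the lemma intends $I_\Omega$ with that sign, or an elementary identity relating $\int |x-c|^2 I$ to the two outer-product integrals closes the gap). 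Granting the identification with $I_\Omega$, this equals $\transp{v_i}(\lambda_j v_j)=\lambda_j\delta_{ij}$, so scaling the $i$-th rotation by $\lambda_i^{\nhalf}$ normalizes it.

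The main obstacle, and the only place requiring genuine care rather than bookkeeping, is the rotation--rotation block: one must get the vector-identity for $(a\times u)\cdot(a\times w)$ right and then match the resulting integral $\int_\Omega\big(|x-c|^2 I - (x-c)\otimes(x-c)\big)\,\mathrm{d}x$ against the definition of $I_\Omega$ given in the statement, paying attention to the sign convention in the outer-product term. Everything else — the positive-definiteness of $I_\Omega$, the translation block, and the vanishing of the cross block via the center-of-mass property — is routine. I would close by remarking, as the lemma does, that $c$ being the center of mass is exactly what decouples translations from rotations, and that the eigenbasis of the inertia tensor is exactly what diagonalizes the rotation block, so the construction is the natural ``physical'' analogue of Gram--Schmidt.
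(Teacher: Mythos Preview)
Your proposal is correct and follows essentially the same three-block verification of the Gram matrix as the paper's proof: translation--translation via orthonormality of the $v_i$, translation--rotation via the scalar triple product and the center-of-mass property, and rotation--rotation via the identification with $\transp{v_i}I_\Omega v_j$. The only difference is that you are more explicit (and more careful) about the vector identity underlying the rotation block and the sign convention in the definition of $I_\Omega$; the paper simply asserts $((x-c)\times v_i,(x-c)\times v_j)=\transp{v_i}I_\Omega v_j$ without working through the identity or addressing the sign.
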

We remark that the rigid motions of the body are in the constructed basis given
in terms of translations along and rotations around the principal axes of the 
tensor that describes its rotational kinetic energy. 

Note also, that the construction can be generalized to yield an orthonormal basis with 
respect to different inner products. In particular, let $Z_h=\spn{\set{z_k}_{k=1}^6}\subset V_h$ 
be functions approximating some basis of $Z$.
For $u, v\in V_h$ let 
$\la{u}=\pi_h u$,  $\la{v}=\pi_h v$ be coefficient vectors in the nodal basis of $V_h$. The $l^2$ orthonormal basis of $Z_h$ 
can be created using Lemma \ref{lm:basis} by replacing $(u, v)$ with
$\transp{\la{u}}\la{v}$. The differences between the bases are shown in Figure
\ref{fig:domains} where the defining principal axes of the $L^2$ and $l^2$ orthonormal 
basis of rigid motions are drawn. If $\Omega$ is uniformly triangulated the
bases are practically identical. However, the $l^2$ basis 
changes in the presence of a non-uniform mesh refinement. 

\begin{figure}
  \begin{center}
  \includegraphics[width=0.45\textwidth]{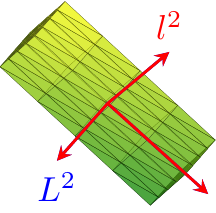}
  \includegraphics[width=0.45\textwidth]{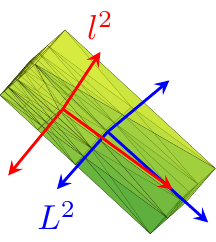}
  \end{center}
\caption{Computational domains considered in the numerical examples for linear
elasticity are obtained by uniformly refining the parent mesh. (Left) Parent is 
  close to uniformly triangulated. (Right) The parent mesh is refined near a single
edge of the domain. The blue and red arrows indicate the principal axes of the tensor
$I_\Omega$, cf. Lemma \ref{lm:basis}, defined using the $L^2$ and $l^2$ inner products. 
Axes are drawn from the center of mass computed using the respected inner
products. Only the $L^2$ basis is stable upon change of triangulation from
  uniform (left) to nonuniform (right).
}
  \label{fig:domains}
\end{figure}

Formulation of the problem \eqref{eq:weak_long} with respect to 
an $L^2$ orthonormal basis $\set{z_k}_{k=1}^{6}$ of the space of rigid motions results 
in the mapping between $Z$ and $\reals^6$ being an isometry. In turn, if the
discretized problem is considered with space $V_h\times \reals^6$ and its natural norm, 
the Brezzi constants will be those obtained in Theorem \ref{thm:dlbb}. On the other hand, 
for a non-orthonormal basis only equivalence between the norms holds: There exists 
$C_1, C_2>0$ such that for all $p\in Z$
\[
  C_1 \semi{c} \leq \norm{p} \leq C_2 \semi{c}, \quad p=\sum_{k=1}^6 c_k z_k
\]
and the constants $C_1, C_2$ enter the estimates in the Brezzi theory. For an
unfortunate choice of the basis it is then possible that $C_1=C_1(h)$ or $C_2=C_2(h)$
leading to mesh dependent performance of a preconditioner using the $l^2$ norm for
(the Lagrange multiplier space) $\reals^6$.


Returning to preconditioner \eqref{eq:B1} recall that the Brezzi constants 
$\alpha^{*}, \alpha_{*}$ depend on the Lam{\' e} constants and thus
$\mathcal{B}_1$ does not define a parameter robust preconditioner. To address the 
dependence on material parameters, we shall at first assume that $\mu$ and $\lambda$ 
are comparable in magnitude. The case $\lambda \gg \mu$ is postponed until \S \ref{sec:mixed}.


\subsection{Robust preconditioning of the singular problem} 

Parameter robust preconditioners for the Lagrange multiplier formulation 
of the singular elasticity problem \eqref{eq:weak_long} can be analyzed by the
operator preconditioning framework of \cite{kent}. The preconditioners are constructed by 
considering \eqref{eq:weak} in parameter dependent spaces, e.g. \cite{bergh}, which are
equivalent with $V$ as a set, but the topology of the spaces is given by different, 
parameter dependent, norms. Two such norms leading to two different
preconditioners are constructed next.

For $u\in V$ consider the orthogonal decomposition $u=u_Z+u_{Z^\perp}$ where 
$u_z\in Z$ and $u_{z^\perp}\in Z^{\perp}$. Bilinear forms $(\cdot, \cdot)_E$, 
$(\cdot, \cdot)_M$ over $V$ are defined in terms of $A$ from \eqref{eq:weak} and 
operators $Y:V\rightarrow\dual{V}$, $M:V\rightarrow\dual{V}$ as
\begin{equation}\label{eq:Vinner}
  \begin{aligned}
    &\brack{Yu, v} = (u_Z, v_Z), &\quad\quad(u, v)_E = \brack{Au, v} + \brack{Yu, v}, \\
    &\brack{Mu, v} = (u, v),           &\quad\quad(u, v)_M = \brack{Au, v} + \brack{Mu, v}.
  \end{aligned}
\end{equation}
The forms \eqref{eq:Vinner} define functionals $\norm{\cdot}_E$ and 
$\norm{\cdot}_M$ over $V$ such that
\begin{equation}\label{eq:Vnorm}
\norm{u}_E = \sqrt{(u, u)_E}
\quad\text{and}\quad
\norm{u}_M = \sqrt{(u, u)_M}.
\end{equation}
\begin{lemma}\label{lm:inners}
  Let $\norm{\cdot}_E$ and $\norm{\cdot}_M$ be the functionals \eqref{eq:Vnorm}.
  Then $\norm{\cdot}_E$ and $\norm{\cdot}_M$ define norms on $V$ which are 
  equivalent with the $H^1$ norm.
\end{lemma}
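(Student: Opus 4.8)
The plan is to prove, for each of $\norm{\cdot}_E$ and $\norm{\cdot}_M$, a two-sided estimate $c\,\norm{u}_1^2 \le (u,u)_\star \le C\,\norm{u}_1^2$ valid for all $u\in V$ with positive constants independent of $u$ (they may depend on $\Omega$, $\mu$, $\lambda$). Once such bounds are available the lemma follows at once: the forms $(\cdot,\cdot)_E$ and $(\cdot,\cdot)_M$ are symmetric and positive semi-definite by construction in \eqref{eq:Vinner}, so $\norm{\cdot}_E$ and $\norm{\cdot}_M$ are seminorms (homogeneity, Cauchy--Schwarz and the triangle inequality are automatic), and the lower bound promotes them to genuine norms, since $\norm{u}_\star = 0$ then forces $\norm{u}_1 = 0$ and hence $u = 0$; equivalence with $\norm{\cdot}_1$ is precisely the two-sided estimate.

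For the upper bounds I would reuse the computation from the proof of Theorem \ref{thm:h1_norm}: by \eqref{eq:help1}--\eqref{eq:help2}, $\brack{Au, u} = 2\mu\norm{\epsilon(u)}^2 + \lambda\norm{\Div{u}}^2 \le (2\mu + 3\lambda)\norm{\nabla u}^2 \le (2\mu+3\lambda)\norm{u}_1^2$. Since $\brack{Mu, u} = \norm{u}^2 \le \norm{u}_1^2$ and $\brack{Yu, u} = \norm{u_Z}^2 \le \norm{u}^2 \le \norm{u}_1^2$ — the last inequality because $u_Z$ is the $L^2$-orthogonal projection of $u$ onto the finite-dimensional subspace $Z$ — both $(u,u)_E$ and $(u,u)_M$ are bounded by $(2\mu + 3\lambda + 1)\norm{u}_1^2$.

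The lower bound for $\norm{\cdot}_M$ is direct: $(u,u)_M \ge 2\mu\norm{\epsilon(u)}^2 + \norm{u}^2 \ge \min\{2\mu, 1\}\bigl(\norm{\epsilon(u)}^2 + \norm{u}^2\bigr) \ge \min\{2\mu,1\}\,C\,\norm{u}_1^2$ by Korn's inequality \eqref{eq:kornV}. For $\norm{\cdot}_E$ I would use the decomposition $u = u_Z + u_{Z^\perp}$. As $\epsilon(u_Z) = 0$ (hence also $\Div{u_Z} = \tr{(\epsilon(u_Z))} = 0$), the form $a$ annihilates the $Z$-part and $\brack{Au,u} = a(u_{Z^\perp}, u_{Z^\perp}) \ge 2\mu\norm{\epsilon(u_{Z^\perp})}^2 \ge 2\mu C\,\norm{u_{Z^\perp}}_1^2$ by Korn's inequality \eqref{eq:kornZ} on $\Zperp$, while $\norm{u_Z}^2 \ge C^{-2}\norm{u_Z}_1^2$ by \eqref{eq:help3}. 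Adding these and invoking $\norm{u}_1^2 = \norm{u_Z + u_{Z^\perp}}_1^2 \le 2\norm{u_Z}_1^2 + 2\norm{u_{Z^\perp}}_1^2$ gives $(u,u)_E \ge \tfrac12\min\{2\mu C, C^{-2}\}\,\norm{u}_1^2$.

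The only slightly delicate point is the lower bound for $\norm{\cdot}_E$, where the $L^2$-orthogonal splitting $u = u_Z + u_{Z^\perp}$ must be coordinated with the two different Korn inequalities — \eqref{eq:kornZ} controls $u_{Z^\perp}$ in $H^1$, whereas $u_Z$ is controlled only through \eqref{eq:help3} — and where one must pass from the $L^2$-orthogonal (but not $H^1$-orthogonal) decomposition back to $\norm{u}_1$ at the cost of the harmless factor $2$. Everything else reduces to Cauchy--Schwarz and the inequalities collected in Lemma \ref{lm:help}.
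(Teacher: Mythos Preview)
Your proof is correct and follows essentially the same route as the paper's own argument: the upper bound via \eqref{eq:help1}--\eqref{eq:help2}, the lower bound for $\norm{\cdot}_M$ via Korn's inequality \eqref{eq:kornV}, and the lower bound for $\norm{\cdot}_E$ via the decomposition $u=u_Z+u_{\Zperp}$, \eqref{eq:kornZ} on $\Zperp$, \eqref{eq:help3} on $Z$, and the factor~$2$ from $\norm{u}_1^2\le 2(\norm{u_Z}_1^2+\norm{u_{\Zperp}}_1^2)$. The only cosmetic remark is that you reuse the letter $C$ for the constants from \eqref{eq:kornZ} and \eqref{eq:help3}, so the expression $\min\{2\mu C,\,C^{-2}\}$ mixes two different constants; the paper keeps them separate as $C$ and $C_1$.
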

\begin{proof}
  From the orthogonal decomposition of $u\in V$ it follows that $\norm{u}^2_M=\norm{u}^2_E+\norm{u_{\Zperp}}^2$. 
  Together with Lemma \ref{lm:help} we thus establish
  \[
  \norm{u}^2_E \leq \norm{u}^2_M \leq (2\mu+3\lambda + 1)\norm{u}^2_1\quad\forall
  u\in V.
  \]
To complete the equivalence, let $C=C(\Omega)$ be the constant from Korn's inequality 
  \eqref{eq:kornV}. Then for all $u\in V$
  \[
  \norm{u}^2_M \geq 2\mu\norm{\epsilon(u)}^2 + \norm{u}^2 \geq c\norm{u}^2_1,
  \]
with $c=C$ for $2\mu>1$ and $c=2\mu C$ otherwise. Finally, for equivalence of the
  $E$-norm, the Korn's inequality on $\Zperp$, see \eqref{eq:kornZ} also 
  Theorem \ref{thm:h1_norm}, yields
  \[  
    \norm{u}^2_E = 2\mu\norm{\epsilon(u)}^2+\lambda\norm{\Div u}^2 \geq 
                   2\mu C\norm{u}^2_1\quad \forall u\in\Zperp
  \]
  with $C=C(\Omega)$, while using \eqref{eq:help3} in Lemma \ref{lm:help} gives
  \[
    \norm{u}_E=\norm{u}\geq C_1(\Omega) \norm{u}_1
  \]
for any $u\in Z$. Thus $E$ and $H^1$ norms are equivalent on $\Zperp$ and $Z$
respectively. The proof is completed by observing \added[id=rev2, remark={comment 4}]{that $u_Z$ and $u_{\Zperp}$
satisfy $(u_Z, u_{\Zperp})_E=0$ so that}
  \[
    \begin{split}
      \norm{u}^2_E = 2\mu\norm{\epsilon(u_{\Zperp})}^2+\lambda\norm{\Div
      u_{\Zperp}}^2 + 
      \norm{u_Z}^2 &\geq 2\mu C \norm{u_{\Zperp}}^2_1 + C_1 \norm{u_Z}^2_1\\
                   &\geq c(\norm{u_{\Zperp}}^2_1+\norm{u_Z}^2_1),
    \end{split}
  \]
$c=\min(2\mu C, C_1)$, while for the $H^1$ inner product 
  $\norm{u}^2_1\leq 2(\norm{u_{\Zperp}}^2_1+\norm{u_Z}^2_1)$ 
  holds. Thus $\norm{u}_E^2\geq \tfrac{c}{2}\norm{u}_1^2$ for all $u\in V$.
\end{proof}

Using equivalent norms of $V$ from Lemma \ref{lm:inners} we readily establish
equivalent norms for the product space $W=V\times Z$
\begin{equation}\label{eq:EM}
  \norm{w}_E=\norm{(u, p)}_E = \sqrt{\norm{u}_E^2+\norm{p}^2}
\quad\text{and}\quad
  \norm{w}_M=\norm{(u, p)}_M = \sqrt{\norm{u}_M^2+\norm{p}^2}
\end{equation}
and consider as preconditioners for \eqref{eq:weak} the operators
$\mathcal{B}_E:\dual{W}\rightarrow W$ and $\mathcal{B}_M:\dual{W}\rightarrow W$
\begin{equation}\label{eq:BEM}
  \mathcal{B}_E = 
  \inv{\begin{pmatrix}
  A + Y &\\
        & I
  \end{pmatrix}}
\quad\text{and}\quad
  \mathcal{B}_M = 
  \inv{\begin{pmatrix}
  A + M &\\
        & I
  \end{pmatrix}}.
\end{equation}
Note that the mappings \eqref{eq:BEM} are the Riesz maps with
respect to the inner products which induce norms \eqref{eq:EM}. We proceed with
analysis of the properties of $\mathcal{B}_E$.

\begin{theorem}\label{thm:e_norm}
Let $\mathcal{A}:W\rightarrow\dual{W}$ be the operator and the space from \eqref{eq:weak} 
  and $W_E$ be the space $W$ considered with $\norm{\cdot}_E$ norm \eqref{eq:EM}. 
  Then $\mathcal{A}:W_E\rightarrow\dual{W_E}$ is an isomorphism. Moreover the Riesz map 
  $\mathcal{B}_E:\dual{W_E}\rightarrow {W_E}$ in \eqref{eq:BEM} defines the canonical 
  preconditioner for \eqref{eq:weak}.
\end{theorem}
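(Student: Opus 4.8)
The plan is to verify the Brezzi conditions for $\mathcal{A}$ with respect to the $E$-norm, reusing the arguments of Theorem~\ref{thm:h1_norm} but now with the parameter-dependent norm $\norm{\cdot}_E$ built into the inner product on $V$. First I would record that, by construction, $(u,v)_E = \brack{Au,v}+\brack{Yu,v}$ is exactly the inner product whose Riesz map is $A+Y$, so that boundedness and coercivity of $a$ on all of $V$ with respect to $\norm{\cdot}_E$ are \emph{automatic} once we know $\norm{\cdot}_E$ is a norm equivalent to $\norm{\cdot}_1$ (Lemma~\ref{lm:inners}): indeed $a(u,u) = \brack{Au,u} \le (u,u)_E = \norm{u}_E^2$, and $a(u,u) = \norm{u}_E^2 - \norm{u_Z}^2$, which on $\Zperp$ equals $\norm{u}_E^2$ since $u_Z=0$ there. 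Hence $a$ is bounded by $1$ and elliptic on $\Zperp$ with constant $1$, both \emph{independent of the Lam\'e parameters and of $h$}. This is the whole point of the rescaling and it is essentially free.

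Next I would handle the off-diagonal form $b(u,q)=(u,q)$ and the multiplier block, measuring $V$ in the $E$-norm and $Z$ in the $L^2$-norm. For boundedness, $b(u,q) = (u_Z,q) \le \norm{u_Z}\,\norm{q} \le \norm{u}_E\,\norm{q}$, since $\norm{u_Z} \le \norm{u}_E$ (the $Z$-component of the $E$-norm is literally $\norm{u_Z}$); so $\beta^* = 1$. For the inf-sup condition, given $p\in Z$ take $v=p\in Z\subset V$; then $b(p,p)=\norm{p}^2$ while $\norm{p}_E = \norm{p}$ because on $Z$ the $E$-norm reduces to the $L^2$-norm (as $\epsilon(p)=0$ and $p_{\Zperp}=0$). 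Thus $\sup_{v}b(v,p)/\norm{v}_E \ge \norm{p}^2/\norm{p} = \norm{p}$ and $\beta_* = 1$. All four Brezzi constants are now absolute constants, so by the Brezzi theorem $\mathcal{A}:W_E\to\dual{W_E}$ is an isomorphism with bounds independent of $h$, $\mu$, $\lambda$; the conforming discrete case $V_h\subset V$, $Z_h=Z$ goes through verbatim exactly as in Theorem~\ref{thm:dlbb}.

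Finally, the statement that $\mathcal{B}_E$ is the canonical preconditioner follows from the operator-preconditioning framework of \cite{kent}: $\mathcal{B}_E$ is precisely the Riesz map $\dual{W_E}\to W_E$ for the inner product inducing $\norm{\cdot}_E$ on $W$, namely $(u,v)_E + (p,q)$, whose operator representation is the block-diagonal $\mathrm{diag}(A+Y,\,I)$ appearing in \eqref{eq:BEM}; since the Brezzi constants just obtained are independent of the discretization parameter, $\mathcal{B}_E\mathcal{A}$ has a condition number bounded uniformly in $h$ (and in the material parameters). The only mildly delicate point is bookkeeping about the $E$-norm on the \emph{product} space $W$: one must be careful that the orthogonal splitting $u=u_Z+u_{\Zperp}$ interacts correctly with the separate multiplier variable $p\in Z$, i.e. that $\norm{(u,p)}_E^2 = \norm{u_{\Zperp}}_E^2 + \norm{u_Z}^2 + \norm{p}^2$ with no cross terms — but this is exactly what \eqref{eq:EM} and the identity $(u_Z,u_{\Zperp})_E=0$ from the proof of Lemma~\ref{lm:inners} give us. I do not expect a genuine obstacle here; the content of the theorem is entirely in the correct choice of norm, which Lemma~\ref{lm:inners} has already supplied, and the remaining work is the short verification above.
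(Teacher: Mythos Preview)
Your proposal is correct and follows essentially the same route as the paper: verify the four Brezzi constants with respect to the $E$-norm, observing that the choice of norm makes $\alpha^*=\alpha_*=\beta^*=\beta_*=1$ via the orthogonal splitting $u=u_Z+u_{\Zperp}$ and the fact that on $Z$ the $E$-norm collapses to the $L^2$-norm. Two cosmetic remarks: your phrase ``coercivity of $a$ on all of $V$'' is a slip (you immediately and correctly restrict to $\Zperp$), and for boundedness you only wrote the diagonal bound $a(u,u)\le\norm{u}_E^2$; the paper makes the off-diagonal step $a(u,v)\le\sqrt{a(u,u)}\sqrt{a(v,v)}$ explicit via Cauchy--Schwarz for the semi-inner product $a$, which you should too.
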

  \begin{proof}
We shall show that the first assertion holds by establishing the Brezzi constants. 
    Recall the definition of the bilinear form $a$ given in \eqref{eq:form_def}. Then,
    by the Cauchy-Schwarz inequality and \eqref{eq:help1} in Lemma \ref{lm:help}, the
    inequality $a(u, v)\leq \sqrt{a(u, u)}\sqrt{a(v, v)}$ holds for any $u, v\in V$. 
    In turn for all $u, v\in V$
\[
  a(u, v) \leq \sqrt{a(u, u)}\sqrt{a(v, v)}
          \leq \sqrt{a(u, u)+(u_Z, u_Z)}
               \sqrt{a(v, v)+(v_Z, v_Z)} = \norm{u}_E\norm{v}_E
\]
and $a$ is bounded with respect to $E$ norm with a constant $\alpha^*=1$.
Further, $u_Z=0$ for $u\in\Zperp$. Hence $a(u, u)=a(u, u)+(u_Z, u_Z)=\norm{u}^2_E$
for all $u\in\Zperp$ and the form is $E$ elliptic on $\Zperp$ with constant $\alpha^*=1$. 
To compute the boundedness constant of the form $b$, the orthogonal decomposition
$u=u_Z+u_{\Zperp}$ is used so that for all $u\in V$, $q\in Z$
    \[
      b(u, q) = (u_Z+u_{\Zperp}, q) = (u_Z, q) \leq \norm{u_Z}\norm{q}
    =\sqrt{a(u, u)+\norm{u_Z}^2}\norm{q} = \norm{u}_E\norm{q}
    \]
  and we have $\beta^{*}=1$. Finally, taking any $q\in Z$ and setting $u=q$ in 
\[
\sup_{u\in V} \frac{b(u, {q})}{\norm{u}_E}
\geq
\frac{(q, q)}{\sqrt{a(q, q)+(q_Z, q_Z)}}
=
    \frac{\norm{q}^2}{\sqrt{0+\norm{q}^2}}
\geq
    \norm{q}
\]
and thus the inf-sup condition holds with $\beta_{*}=1$.
As all the constants are independent of material parameters, the second assertion 
follows from the first one by operator preconditioning \cite[ch 5.]{kent}.
  \end{proof}
Using Theorem \ref{thm:e_norm} it is readily established that the condition
number of the composed operator $\mathcal{B}_E \mathcal{A}:W\mapsto W$ is equal 
to one. We further note that discretizing operator $\mathcal{B}_E$ leads to discrete 
nullspace  preconditioners of \cite[ch 6.]{liesen}. 

While the spectral properties of $\mathcal{B}_E$ are appealing, the preconditioner is
impractical. Consider $\la{B}_E$ as a matrix representation of the Galerkin 
approximation of $\mathcal{B}_E$ in $W_h\subset W$. Then 
$\la{B}_E=\inv{\text{diag}(\la{A}+\la{Y}\transp{\la{Y}}, \la{I})}$ where
$\la{Y}=\reals^{n\times 6}$, $\la{y}_k=\text{col}_k\la{Y}=\pi_h z_k$ and 
$z_k\in V_h$ is the function from the $L^2$ orthogonal basis of the space of rigid motions. 
Due to the second (nonlocal) term the matrix $\la{A}+\la{Y}\transp{\la{Y}}$ is dense. Further, as
shall be discussed in \S\ref{sec:krylov}, inverting the operator requires computing 
(the action of) the pseudoinverse of the singular matrix $\la{A}$. The mapping
$\mathcal{B}_M$, on the other hand, leads to a more practical preconditioner.
\begin{theorem}\label{thm:m_norm} Let $\mathcal{A}:W\rightarrow\dual{W}$ be the
  operator and space defined in \eqref{eq:weak} and $W_M$ be defined analogically
  to Theorem \ref{thm:e_norm}. Then $\mathcal{A}:W_M\rightarrow\dual{W_M}$ 
  is an isomorphism. Moreover the Riesz map $\mathcal{B}_M:\dual{W_M}\rightarrow W_M$ in
\eqref{eq:BEM} defines a parameter robust preconditioner for \eqref{eq:weak}.
\end{theorem}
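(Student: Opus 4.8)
The plan is to mirror the proof of Theorem~\ref{thm:e_norm}. Since Lemma~\ref{lm:inners} already establishes that $\norm{\cdot}_M$ is a norm on $V$ equivalent to $\norm{\cdot}_1$, the space $W_M$ is a Hilbert space whose norm \eqref{eq:EM} is equivalent to the natural product norm of $W=V\times Z$. It therefore suffices to recompute the four Brezzi constants of the saddle point problem \eqref{eq:weak} with respect to the $M$ norm and to check that they are bounded away from $0$ and $\infty$ independently of $\lambda$; the claimed isomorphism and the optimality of the Riesz map $\mathcal{B}_M$ will then follow from the Brezzi theory \cite{brezzi} and operator preconditioning \cite[ch 5.]{kent}, just as before.

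For the form $a$ I would obtain $M$-boundedness with $\alpha^{*}=1$ exactly as in Theorem~\ref{thm:e_norm}: the Cauchy--Schwarz inequality and \eqref{eq:help1} give $a(u,v)\leq\sqrt{a(u,u)}\sqrt{a(v,v)}$, and $a(u,u)\leq\norm{u}_M^2$. The ellipticity on the kernel $\Zperp=\set{v\in V; b(v,q)=0\,\forall q\in Z}$ is the one step where the $M$ norm genuinely differs from the $E$ norm, since now $\norm{u}_M^2=a(u,u)+\norm{u}^2$ strictly exceeds $a(u,u)$ and $\alpha_{*}=1$ is no longer available. Here I would use Korn's inequality \eqref{eq:kornZ} together with $a(u,u)\geq 2\mu\norm{\epsilon(u)}^2$ to bound $\norm{u}^2\leq(2\mu C)^{-1}a(u,u)$ for $u\in\Zperp$, which yields $\alpha_{*}=2\mu C/(2\mu C+1)$ --- a constant depending on $\mu$ but not on $\lambda$, which is acceptable under the standing assumption that $\mu$ and $\lambda$ are comparable.

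For the form $b$, boundedness with $\beta^{*}=1$ follows from $\norm{u}_M\geq\norm{u}$ and Cauchy--Schwarz. For the inf-sup condition I would test with $u=q$ for an arbitrary $q\in Z$ and use that $\epsilon(q)=0$ forces $\Div q=0$, so $a(q,q)=0$ and $\norm{q}_M=\norm{q}$; this gives $\beta_{*}=1$ as in Theorem~\ref{thm:e_norm}. With all four constants independent of $\lambda$, the Brezzi theory makes $\mathcal{A}:W_M\rightarrow\dual{W_M}$ an isomorphism with $\lambda$-uniform bounds, and \cite[ch 5.]{kent} then identifies $\mathcal{B}_M$, the Riesz map of the inner product inducing $\norm{\cdot}_M$, as a preconditioner whose condition number is controlled by those constants, hence robust in $\lambda$.

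The only real obstacle is the $\Zperp$-ellipticity estimate above; everything else is a routine transcription of the $E$-norm argument. Separately from the proof I would also remark that, unlike $A+Y$, the operator $A+M$ is local, so its Galerkin matrix $\la{A}+\la{M}$ is sparse and symmetric positive definite, making $\mathcal{B}_M$ implementable in practice (for instance via an AMG sweep), which is the main reason to prefer it over $\mathcal{B}_E$.
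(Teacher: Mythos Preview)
Your proposal is correct and follows essentially the same route as the paper: boundedness of $a$ and $b$ with constant~$1$, inf--sup with $\beta_*=1$ by testing with $v=q\in Z$, and the only nontrivial step is $M$-ellipticity of $a$ on $\Zperp$, obtained from Korn's inequality \eqref{eq:kornZ} by bounding $\norm{u}^2$ against $\norm{\epsilon(u)}^2$ and then against $a(u,u)$.

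The one difference worth flagging concerns the ellipticity constant. You end with $\alpha_*=2\mu C/(2\mu C+1)$ and argue that its $\mu$-dependence is harmless under the standing ``$\mu$ and $\lambda$ comparable'' assumption. That assumption, however, does not by itself bound $\mu$ away from zero, so strictly speaking you have shown $\lambda$-robustness rather than full parameter robustness. The paper closes this gap by invoking the additional normalisation $2\mu\geq 1$ (stated in the remark immediately following the proof and justified by rescaling), which turns the chain $\norm{u}^2\leq C\norm{\epsilon(u)}^2\leq C\,a(u,u)$ into a $\mu$-free bound and yields $\alpha_*=(1+C)^{-1}$ with $C=C(\Omega)$ only. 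Adopting the same normalisation in your argument immediately upgrades your constant to $C/(C+1)$ and delivers the claimed robustness in both Lam\'{e} parameters.
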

\begin{proof}
As in the proof of Theorem \ref{thm:e_norm} we establish that $a(u, v)\leq\norm{u}_M\norm{v}_E$ 
for all $u, v\in V$ and $b(v, p)\leq \norm{v}\norm{p}\leq \norm{v}_M\norm{p}$ for 
all $v\in V$, $p\in Z$. Setting $v=p\in Z$ then yields
  $\inf_{p\in Z}\sup_{v\in V}\tfrac{b(v, p)}{\norm{v}_M\norm{p}}\geq 1$.
  For $M$ ellipticity of $a$ on $\Zperp$, assume existence of $C=C(\Omega)$ such
  that $\norm{u}^2\leq C\norm{\epsilon(u)}^2$ for all $u\in \Zperp$. Then on $\Zperp$
  \[
    \norm{u}^2\leq C\norm{\epsilon(u)}^2\leq C\mu\norm{\epsilon(u)}^2 \leq
    C(2\mu\norm{\epsilon(u)}^2+\lambda\norm{\Div u}^2)=C\norm{u}^2_E
  \]
and
  \[
    \norm{u}^2_M = \norm{u}^2_E + \norm{u}^2 \leq (C+1)\norm{u}^2_E
  \]
so that $a(u, u)=\norm{u}^2_E\geq\inv{(1+C)}\norm{u}^2_M$. Finally we comment on
the assumption of existence of the constant $C$. Assume the contrary. Then there
is $u\in\Zperp$ such that $\norm{e(u)}=1$, $\norm{w(u)}=0$ and the $\norm{u}$
  unbounded. However, such $u$ violates Korn's inequality \eqref{eq:kornZ}.
\end{proof}

We remark that Theorem \ref{thm:m_norm} required an additional assumption 
$2\mu\geq 1$. The assumption is not restrictive as it can be always achieved by
scaling the equations such that the inequality is satisfied. Note also that with
the orthonormal basis of rigid motions the discrete 
  preconditioner based on $\mathcal{B}_M$ is such that
  $\inv{\la{B}_M}=\text{diag}(\la{A}+\la{M}, \la{I})$, with
$\la{M}$ the mass matrix. The system to be assembled is therefore sparse. 

Following Theorem \ref{thm:m_norm} the condition number of the preconditioned 
operator $\mathcal{B}_M \mathcal{A}:W\rightarrow W$ depends solely on the constant $C$ from Korn's 
  inequality \eqref{eq:kornZ}. An approximation for the constant is provided by 
  the smallest positive eigenvalue $\lambda^{+}_{\text{min}}$ of the problem
\[
  \begin{pmatrix}
  \la{A}           & \la{B}\\
  \transp{\la{B}}  &
  \end{pmatrix}\begin{pmatrix}\la{u}\\
                            \la{p}\end{pmatrix}=
                            \lambda
  \begin{pmatrix}
    \la{A} + \la{M} &   \\
                    & \la{I}
  \end{pmatrix}\begin{pmatrix}\la{u}\\
                            \la{p}\end{pmatrix}.
\]
In Table \ref{tab:eigenvalues}, Appendix \ref{sec:appendix}, the constant has
been computed for two different domains; a cube from Example
\ref{ex:pin_elasticity} and a hollow cylinder. In both cases $C \approx 1$ can
be observed.

In order to demonstrate $h$ robust properties of $\mathcal{B}_M$, the problem from
Example \ref{ex:pin_elasticity} is considered with basis from \S \ref{sec:make_rm} 
and discretized on $V_h\subset V$. The resulting preconditioned linear system is 
solved by the minimal residual (MinRes) method \cite{minres} as implemented
in \textit{cbc.block}, the FEniCS library for block matrices \cite{block}
using as the preconditioner
\[
  \la{B}_M=
  \begin{pmatrix}
    \text{AMG}(\la{A} + \la{M}) &   \\
                                & \la{I}
  \end{pmatrix}.
\]
\added[id=rev1, remark={comment 4}]{More specifically, the preconditioner
  uses a single AMG $V$ cycle with one pre and post smoothing by a symmetric SOR
  smoother. The rigid motions were not passed to the routine on initialization.
}
The saddle point system was assembled and inverted\footnote{Implementation of the
solver as well as the two algorithms discussed in \S\ref{sec:energy} and
\S\ref{sec:mixed} can be found online at \url{https://github.com/MiroK/fenics-rigid-motions}.
}using \textit{cbc.block}. The results of the experiment are 
presented in Table \ref{tab:krylov_lagrange_cmp}. Clearly, the number of iterations 
required for convergence is independent of the discretization. Moreover, the method 
yields numerical solutions which converge in the $H^1$ norm at the optimal
rate\footnote{We recall that $V_h$ is constructed from continuous linear
Lagrange elements.} on both the uniform and nonuniform meshes, cf. Figure \ref{fig:domains}. 

A drawback of the Lagrange multiplier formulation is the cost of solving the 
resulting indefinite linear system.
Following e.g.
\cite[ch 7.2]{greenbaum1997iterative} let the condition number of a Hermitian matrix $\la{A}$ be
$\kappa(\la{A})=\sfrac{\lambda_{\text{max}}(\la{A})}{\lambda_{\text{min}}(\la{A})}$ where
$\lambda_{\text{max}}(\la{A})$, $\lambda_{\text{min}}(\la{A})$ are respectively the largest
and smallest (in magnitude) eigenvalues of the matrix. For $\la{A}$ Hermitian indefinite and 
under simplifying assumptions on the spectrum \cite[ch 3.2]{liesen_tichy} gives the following 
bound on the relative error in residual $r_n$ at step $n$ of the MinRes method
\[
  \frac{\semi{r_n}}{\semi{r_0}}
  \leq 2\left(\frac{\kappa(\la{A})-1}{\kappa(\la{A})+1}\right)^{\floor{n/2}}.
\]
The result should be contrasted with a similar one for the error $e_n$ at the 
$n$-th step of CG method on symmetric positive definite matrix $\la{A}$, e.g. 
\cite[thm 38.5]{trefethen},
\[
  \frac{\transp{e_n}\la{A}e_n}{\transp{e_0}\la{A}e_0}
  \leq 2\left(\frac{\sqrt{\kappa(\la{A})}-1}{\sqrt{\kappa(\la{A})}+1}\right)^{n}.
\]
%


While the above estimates are known to give the worst case behavior of the two 
methods, the faster rate of convergence of CG motivates investigating formulations 
of \eqref{eq:strong} to which the conjugate gradient method can be applied. 

\section{Conjugate gradient method for discrete singular problems}\label{sec:krylov}
We consider a variational formulation of \eqref{eq:strong}: Find $u\in
V=\left[H^1(\Omega)\right]^3$ such
that
\begin{equation}\label{eq:cg_long}
2\mu(\epsilon(u), \epsilon(v))+\lambda(\nabla\cdot u, \nabla\cdot v) = 
(f, v)+(h, v)_{\partial\Omega}\quad \forall v\in V.
\end{equation}
Denoting $a:V\times V\rightarrow\reals, l:\dual{V}\rightarrow\reals$ the bilinear
and linear forms defined by \eqref{eq:cg_long}, we note that the problem is not well-posed in
$V$. Indeed, the compatibility conditions \eqref{eq:compat} restrict the
functionals for which the solution can be found to
$l\in\Zpolar=\set{f\in\dual{V};\brack{f, z}=0\,\forall z\in Z}$. 
Moreover, only the part of $u$ in $\Zperp$ is uniquely determined by
\eqref{eq:cg_long}. More precisely we have the following result. 

\begin{theorem}\label{thm:cg}
Let $l\in\Zpolar$. Then there exists a unique solution of the problem
  \begin{equation}\label{eq:cg_zperp}
\text{Find }u\in\Zperp\text{ such that for any }v\in\Zperp\text{ it holds that
    }a(u, v) = \brack{l, v}.
  \end{equation}
\end{theorem}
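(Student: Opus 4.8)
The plan is to apply the Lax--Milgram theorem on the closed subspace $\Zperp \subset V$, equipped with the $H^1$ norm. First I would note that $\Zperp$, being the orthogonal complement in $V$ of the finite-dimensional space $Z$, is itself a Hilbert space under $\norm{\cdot}_1$. On this space the bilinear form $a$ is bounded: by the Cauchy--Schwarz inequality together with \eqref{eq:help1} and \eqref{eq:help2} (exactly as in the boundedness estimate in the proof of Theorem~\ref{thm:h1_norm}) one gets $a(u,v) \leq (2\mu + 3\lambda)\norm{\nabla u}\norm{\nabla v} \leq \alpha^{*}\norm{u}_1\norm{v}_1$. The form is also coercive on $\Zperp$: since $\lambda \geq 0$, $a(u,u) = 2\mu\norm{\epsilon(u)}^2 + \lambda\norm{\Div u}^2 \geq 2\mu\norm{\epsilon(u)}^2$, and Korn's second inequality \eqref{eq:kornZ} gives $2\mu\norm{\epsilon(u)}^2 \geq 2\mu C\norm{u}_1^2$ for all $u \in \Zperp$.

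Next I would check that the right-hand side defines a bounded linear functional on $\Zperp$. Since $l \in \dual{V}$ by hypothesis (the assumption $l \in \Zpolar$ in particular means $l$ is a bounded functional on all of $V$), its restriction to the closed subspace $\Zperp$ is a bounded linear functional there, with norm no larger than $\norm{l}_{\dual{V}}$. Hence Lax--Milgram applies and yields a unique $u \in \Zperp$ with $a(u,v) = \brack{l,v}$ for all $v \in \Zperp$, which is precisely \eqref{eq:cg_zperp}.

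There is no serious obstacle here; the statement is essentially a restriction of the Brezzi argument of Theorem~\ref{thm:h1_norm} to the constraint space, and all the needed estimates (boundedness via Lemma~\ref{lm:help}, coercivity via Korn's inequality \eqref{eq:kornZ}) are already in hand. The only point deserving a word of care is that the hypothesis $l \in \Zpolar$ plays no role in the \emph{existence and uniqueness on $\Zperp$} (that follows for any $l \in \dual{V}$); its relevance is rather in the interpretation — it is the condition under which a solution of the problem \eqref{eq:cg_long} posed on all of $V$ exists, and in that case the $\Zperp$-component of such a solution coincides with the $u$ produced here. I would mention this briefly so the reader sees why the compatibility condition was singled out, but the proof of the stated theorem itself needs only Lax--Milgram on $\Zperp$.
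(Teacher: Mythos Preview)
Your proposal is correct and follows the same approach as the paper: the paper simply notes that boundedness and ellipticity of $a$ on $\Zperp$ were already established in Theorem~\ref{thm:h1_norm} and defers the complete Lax--Milgram argument to \cite[Theorem 11.2.30]{brenner}. Your additional remark that the hypothesis $l\in\Zpolar$ is not strictly needed for existence and uniqueness on $\Zperp$ is a valid and useful observation.
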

\begin{proof}
The complete proof can be found as Theorem 11.2.30 in \cite{brenner}. Note that
boundedness and ellipticity of $a$ on $\Zperp$ with $\norm{\cdot}_1$ are proven 
as part of Theorem \ref{thm:h1_norm}.
\end{proof}
We remark that if \eqref{eq:compat} holds then $u\in\Zperp$ solves \eqref{eq:cg_zperp} 
if and only if $(u, 0)$ solves the Lagrange multiplier problem \eqref{eq:weak}.
Further, the well-posed variational problem \eqref{eq:cg_zperp} is not suitable for
discretization by the finite element method as the approximation leads to a
dense linear system. A sparse discrete problem to which the conjugate gradient 
method shall be applied is therefore derived from \eqref{eq:cg_long}. 

Recall $\dim{Z}=6$, $n=\dim{V_h}$ and let $V_h=\spn{\set{\phi_i}_{i=1}^n}$. Discretizing
the variational problem \eqref{eq:cg_long} leads to a linear system
\begin{equation}\label{eq:Axb}
  \la{A}\la{u}=\la{b},
\end{equation}
where $\la{A}\in\reals^{n\times n}$ such that $A_{ij}=a(\phi_j, \phi_i)$ and
vector $\la{b}\in\reals^n$, $b_i=\brack{l, \phi_i}$. Note that we shall consider
\eqref{eq:Axb} for a general right hand side, that is, not necessarily a
discretization of $l\in\Zpolar$. We proceed by reviewing properties of the discrete 
system.

Due to symmetry and ellipticity of the bilinear form $a$ on $\Zperp$ there exists 
respectively $6$ vectors $\la{z}_k$ and $n-6$ eigenpairs $(\gamma_i, \la{u}_i)$, 
$\gamma_i>0$ such that $\la{A}\la{z}_k=0$, $\transp{\la{z}_k}\la{u}_i=0$, 
$\la{A}\la{u}_i=\gamma_i\la{u}_i$ and $\transp{\la{u}_i}\la{u}_j=\delta_{ij}$. 
From the decomposition of $\la{A}$ it follows that the system \eqref{eq:Axb} is 
solvable if and only if $\transp{\la{z}_k}\la{b}=0$ for any $k$ and the unique 
solution of the system is $\la{u}\in\spn{\set{\la{u}_i}_{i=1}^{n-6}}$. We note that the 
last statement is the Fredholm alternative for \eqref{eq:Axb}. As a further
consequence of the decomposition it is readily verified that given compatible 
vector $\la{b}$, the solution of \eqref{eq:Axb} is $\la{u}=\la{B}_A\la{b}$ with
$\la{B}_A$ such that $\la{B}_A\la{y}=\sum_i\inv{\gamma_i}\left(\transp{\la{u}_i}\la{y}\right)\la{u}_i$.
The matrix $\la{B}_A$ is the pseudoinverse \cite{penrose} or natural inverse 
\cite[ch 3.]{lanczos} of $\la{A}$. 

We note that any vector from $\reals^n$ can be orthogonalized with
respect to the kernel of $\la{A}$ by a projector $\la{P}_Z=\la{I}-\la{Z}\transp{\la{Z}}$,
where $\la{Z}\in\reals^{n\times 6}$ is the matrix consisting of $l^2$ orthonormal 
basis vectors of the kernel.

With $\la{b}$ such that $\transp{\la{Z}}\la{b}=0$ the solution $\la{u}$ of linear 
system \eqref{eq:Axb} can be computed by the conjugate gradient method, e.g. 
\cite{shewchuk}. Let $\la{u}^0$ be the starting vector for the iterations. Then,
assuming exact arithmetic and no preconditioner, the method preserves the
component of $\la{u}^0$ in $\la{Z}$, i.e. $\transp{\la{Z}}\la{u}^0=\transp{\la{Z}}\la{u}$.
In particular, $\transp{\la{Z}}\la{u}^0=0$ is required to obtain a solution
orthogonal to the kernel. On the other hand, let $\la{B}$ be the CG
preconditioner. Then the iterations introduce components of the kernel to the
solution even if $\transp{\la{Z}}\la{u}^0=0$, unless the range of $\la{B}$ is 
orthogonal to $\la{Z}$. 

\subsection{Preconditioned CG for singular elasticity problem}
A suitable preconditioner for \eqref{eq:Axb} is obtained by a composition with the 
$\la{P}_Z$ projector and we shall consider $\la{B}_M=\la{P}_Z\inv{(\la{A}+\la{M})}$ 
where $\la{M}$ is the mass matrix. That the preconditioner leads to bounded iteration
count (and converging numerical solutions) is demonstrated in Table 
\ref{tab:krylov_cmp}, cf. left pane. The preconditioner is also compared with
a different preconditioner based on the approximation of the pseudoinverse $\la{B}_A$. 
The approximation can be constructed by passing 
a kernel of the operator to \deleted{the multigrid preconditioner} the CG routine,
in the form of the $l^2$ orthonormal basis vectors, see \emph{MatSetNullSpace} in
PETSc\cite{petsc}. Note that the preconditioners perform similarly in terms of iteration 
count, however, for large systems the pseudoinverse appears to be a faster method.

We remark that in terms of operator preconditioning, the preconditioner based on the
pseudoinverse can be interpreted as a Riesz map $\Zpolar\rightarrow\Zperp$
defined with respect to the inner product induced by the bilinear form $a$.
Recall that $a$ is symmetric and elliptic on $Z^{\perp}$. On the other hand
$\la{B}_M$ approximates a mapping $\Zpolar\rightarrow V\rightarrow\Zperp$.
\begin{table}
  \begin{center}
  \caption{
Preconditioned CG iterations on \eqref{eq:Axb} obtained by discretization of 
    \eqref{eq:cg_long} with problem parameters as in Example 
\ref{ex:pin_elasticity} and two preconditioners. Both systems are solved with 
relative tolerance of $10^{-10}$. Uniform mesh is used.
  }
\footnotesize{
\begin{tabular}{l|lcl|lcl}
\hline
  \multirow{2}{*}{size} & \multicolumn{3}{c|}{$\la{P}_z\text{AMG}(\la{A}+\la{M})$}
                        &  \multicolumn{3}{c}{$\text{AMG}(\la{A}|\la{Z})$}\\
  \cline{2-7}
 & $\norm{u-u_h}_1$ & \# & time $\left[s\right]$ &
   $\norm{u-u_h}_1$ & \# & time $\left[s\right]$\\
\hline
14739   & 1.14E-02 (1.09) & 22 & 0.491 & 1.14E-02 (1.09) & 21 & 0.537\\
107811  & 5.49E-03 (1.06) & 23 & 10.17 & 5.49E-03 (1.06) & 23 & 10.96\\
823875  & 2.71E-03 (1.02) & 24 & 103.5 & 2.71E-03 (1.02) & 25 & 86.51\\
6440067 & 1.35E-03 (1.00) & 26 & 1580  & 1.35E-03 (1.00) & 26 & 911.9\\
\hline
\end{tabular}
}
\label{tab:krylov_cmp}
\end{center}
\end{table}

Having established preconditioners for the indefinite system stemming from the 
Lagrange multiplier formulation \eqref{eq:weak} and the positive semi-definite
problem stemming from \eqref{eq:cg_long}, we shall finally discuss approximation
properties of the computed solutions. To this end the problem from Example
\ref{ex:pin_elasticity} is considered where $f$ is perturbed by rigid motions. 
Note that while with the new functional $l$ the problem \eqref{eq:weak} is
well-posed, in \eqref{eq:Axb} a compatible right hand side $\la{b}$ will be obtained
by projector $\la{P}_Z$.

Results of the experiment are listed in Table \ref{tab:krylov_lagrange_cmp}.
The Lagrange multiplier method converges at optimal rate on both uniformly 
and non-uniformly discretized mesh, cf. Figure \ref{fig:domains}. On the other hand, 
solutions to \eqref{eq:Axb} converge to the true solution \textit{only} on the 
uniform mesh while there is no convergence with nonuniform discretization. Note
that this is not signaled by growth of the iterations - for both
methods the iteration counts are bounded. Note also that MinRes takes about
twice as many iterations as CG.

\begin{table}[]
  \begin{center}
  \caption{
    (top) Convergence properties of the Lagrange multiplier formulation \eqref{eq:weak} 
    and (bottom) the singular formulation \eqref{eq:cg_long} utilizing $l^2$ orthogonal basis
of the nullspace to invert the system \eqref{eq:Axb}. Only the
multiplier formulation yields solutions converging on uniform and nonuniform meshes.
Relative tolerances of $10^{-11}$ and $10^{-10}$ are used for MinRes and CG 
respectively.
}
\footnotesize{
\begin{tabular}{l|lcc||l|lcc}
\hline
\multicolumn{4}{c||}{uniform} & \multicolumn{4}{c}{refined}\\
  \hline
  size  & $\norm{u-u_h}_1$ & \# & $\max_Z|(u_h, z)|$ & size & $\norm{u-u_h}_1$ & \# & $\max_Z|(u_h, z)|$ \\
\hline
14745   & 1.03E-02 (1.14) & 44 & 3.54E-07 & 13080   & 3.11E-02 (0.99) & 50 & 1.68E-07\\
107817  & 4.84E-03 (1.09) & 45 & 2.77E-06 & 98052   & 1.41E-02 (1.14) & 53 & 6.73E-08\\
823881  & 2.36E-03 (1.03) & 45 & 1.38E-06 & 759546  & 6.53E-03 (1.11) & 54 & 8.11E-07\\
6440073 & 1.18E-03 (1.00) & 44 & 1.75E-05 & 5978835 & 3.20E-03 (1.03) & 55 & 2.94E-06\\
\hline
\hline
14739   & 1.14E-02 (1.09) & 21 & 1.30E-03 & 13074   & 5.51E-02 (0.45) & 26 & 6.06E-03\\
107811  & 5.49E-03 (1.06) & 23 & 6.66E-04 & 98046   & 5.05E-02 (0.12) & 27 & 6.32E-03\\
823875  & 2.71E-03 (1.02) & 25 & 3.36E-04 & 759540  & 5.00E-02 (0.02) & 29 & 6.43E-03\\
6440067 & 1.35E-03 (1.00) & 26 & 1.69E-04 & 5978829 & 4.98E-02 (0.01) & 31 & 6.49E-03\\
\hline
\end{tabular}
}
\label{tab:krylov_lagrange_cmp}
\end{center}
\end{table}

From the experiment we conclude that the conjugate gradient method for
\eqref{eq:Axb}, as applied so far, in general does not yield converging 
numerical solutions of \eqref{eq:cg_long}. It is next shown that the issue is
due projector $\la{P}_Z=\la{I}-\la{Z}\transp{\la{Z}}$ which the method uses and 
which is derived from the discrete problem. In particular, we show
that $\la{P}_Z$ is not a correct discretization of a projector used in the
continuous problem \eqref{eq:cg_zperp} (and \eqref{eq:weak}). Following the
continuous problem, a modification to CG is proposed, which leads to a converging 
method.

\subsection{Conjugate gradient method with $\Zpolar$, $\Zperp$ projectors}\label{sec:kpp}
Consider the variational problem \eqref{eq:cg_zperp} which was proven well-posed
in Theorem \ref{thm:cg} under the assumptions $l\in\Zpolar\subset\dual{V}$ and
$u\in\Zperp\subset V$. In this respect, there are two subspaces associated
with \eqref{eq:cg_zperp} and we shall define two projectors
$P:V\rightarrow\Zperp$, $\dual{P}:\dual{V}\rightarrow\Zpolar$ such that for
$u\in V$, $f\in\dual{V}$
\begin{equation}\label{eq:proj}
  \begin{aligned}
    &(P u, v) = (u_{\Zperp}, v)\quad \forall v\in V,\\
    &\brack{\dual{P}f, u} = \brack{f, u-u_z} .
  \end{aligned}
\end{equation}
Similar projectors were discussed in \cite{bochev} for the singular Poisson problem. 
We note that $\brack{f, Pu}=\brack{\dual{P} f, u}$ and thus $\dual{P}$ is the
adjoint of $P$. \deleted{Note also that the two projectors are present in the multiplier formulation \eqref{eq:weak}.}
\begin{lemma} Let $l\in\dual{V}$ and $P, \dual{P}$ be the projectors \eqref{eq:proj}.
  Then $(u, p)\in V\times Z$ solves \eqref{eq:weak} with the right hand side
  $(v, q)\mapsto \brack{l, v}+\brack{0, q}$ if and only if $u\in\Zperp$ and $u$ solves
  \eqref{eq:cg_zperp} with the right hand side $\dual{P}l$.
\end{lemma}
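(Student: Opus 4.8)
The plan is to prove the biconditional by splitting into the two implications, using the structure of the saddle point system \eqref{eq:weak} and the definition of the projectors in \eqref{eq:proj}. The key observation is that the first equation of \eqref{eq:weak_long} reads $a(u,v) - (p,v) = \brack{l,v}$ for all $v\in V$, while the second reads $(u,q)=0$ for all $q\in Z$, i.e. $u\in\Zperp$. Since $p\in Z$, the term $(p,v)$ sees only the $Z$-component of $v$, which is the mechanism that will connect $p$ to the discrepancy between $l$ and $\dual{P}l$.

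For the forward direction, suppose $(u,p)\in V\times Z$ solves \eqref{eq:weak}. The second equation immediately gives $u\in\Zperp$. To show $u$ solves \eqref{eq:cg_zperp} with right-hand side $\dual{P}l$, I would take an arbitrary $v\in\Zperp$ in the first equation: since $v_Z=0$ we get $(p,v)=0$, hence $a(u,v)=\brack{l,v}$. But for $v\in\Zperp$ one has $v - v_Z = v$, so $\brack{\dual{P}l,v} = \brack{l, v-v_Z} = \brack{l,v}$ by \eqref{eq:proj}; therefore $a(u,v)=\brack{\dual{P}l,v}$ for all $v\in\Zperp$, which is exactly \eqref{eq:cg_zperp}.

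For the converse, suppose $u\in\Zperp$ solves \eqref{eq:cg_zperp} with right-hand side $\dual{P}l$. Then $u\in\Zperp$ means $(u,q)=0$ for all $q\in Z$, so the second equation of \eqref{eq:weak_long} holds. It remains to produce $p\in Z$ making the first equation hold for \emph{all} $v\in V$, not just $v\in\Zperp$. The natural candidate is the Riesz representative in $Z$ of the functional $v\mapsto a(u,v)-\brack{l,v}$ restricted to $Z$; concretely, define $p\in Z$ by $(p,q) = a(u,q)-\brack{l,q}$ for all $q\in Z$ (well-defined since $Z$ is six-dimensional and $(\cdot,\cdot)$ is an inner product on it). Then for general $v\in V$ write $v=v_Z+v_{\Zperp}$: using bilinearity, $a(u,v)-(p,v)-\brack{l,v}$ splits as $[a(u,v_{\Zperp})-\brack{l,v_{\Zperp}}] + [a(u,v_Z)-(p,v_Z)-\brack{l,v_Z}]$, where I used $(p,v_{\Zperp})=0$ since $p\in Z$. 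The second bracket vanishes by the definition of $p$. For the first bracket, $v_{\Zperp}\in\Zperp$ and $\brack{\dual{P}l,v_{\Zperp}}=\brack{l,v_{\Zperp}}$, so it equals $a(u,v_{\Zperp})-\brack{\dual{P}l,v_{\Zperp}}=0$ by \eqref{eq:cg_zperp}. Hence the first equation holds for all $v\in V$, and $(u,p)$ solves \eqref{eq:weak}.

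I do not expect a serious obstacle here; the proof is essentially bookkeeping with the orthogonal decomposition $V=Z\oplus\Zperp$ and the adjoint relation $\brack{l,Pv}=\brack{\dual{P}l,v}$. The one point requiring a word of care is the well-posedness of the defining relation for $p$ in the converse direction — that $v\mapsto a(u,v)-\brack{l,v}$ indeed has a Riesz representative \emph{inside} $Z$ — but this is immediate because $Z$ is finite-dimensional and $(\cdot,\cdot)$ is positive definite on $Z$ (equivalently, one may expand in the orthonormal basis $\set{z_k}$ of \S\ref{sec:make_rm}). A secondary point is simply to keep the pairing $\brack{\cdot,\cdot}$ and the inner product $(\cdot,\cdot)$ notationally straight, since on $Z\subset V$ both are available and the functional $l$ acts through the former.
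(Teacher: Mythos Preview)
Your proof is correct and follows essentially the same route as the paper. The paper is terser: it identifies $p$ in one stroke by testing the first equation of \eqref{eq:weak} with $v=z\in Z$ (using $a(u,z)=0$ since $\epsilon(z)=0$), which gives $(p,z)=-\brack{l,z}$ and hence $\brack{l,v}-(p,v)=\brack{l,v-v_Z}=\brack{\dual{P}l,v}$ for all $v\in V$; your Riesz-representative construction of $p$ in the converse direction is exactly this once you observe $a(u,q)=0$ for $q\in Z$, so your $(p,q)=a(u,q)-\brack{l,q}$ simplifies to $(p,q)=-\brack{l,q}$.
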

\begin{proof}
It suffices to establish the relation between the right hand sides.
  Testing \eqref{eq:weak} with $(z, 0)$, $z\in Z$ yields that $(p, z)=\brack{l, z}$. In
  turn we have for any $v\in V$
  \[
    \brack{l, v}-(p, v) = \brack{l, v}-(p, v_Z+v_{\Zperp}) = \brack{l, v} -
    \brack{l, v_Z} = \brack{l, v-v_Z} = \brack{l, P v}
  \]
and the new right hand side of \eqref{eq:weak} is therefore
  $(v, q)\mapsto \brack{\dual{P}l, v}+\brack{0, q}$.
\end{proof}

To derive a matrix representation of the projectors with respect to nodal basis 
$V_h=\spn{\set{\phi_i}_{i=1}^n}$, the mappings $\pi_h:V_h\rightarrow\reals^n$ (the
nodal interpolant) and $\mu_h:\dual{V}_h\rightarrow\reals^n$ from
\eqref{eq:pimu} are used. We recall that $(u, v)=\transp{\la{v}}\la{M}\la{u}$ for
$\la{u}=\pi_h u$, $\la{v}=\pi_h v$ and $\la{M}$, $M_{ij}=(\phi_j, \phi_i)$ the mass matrix
while $\brack{f, v}=\transp{\la{f}}\la{v}$ with $\la{f}=\mu_h f$. Finally,
matrix $\la{Y}=\reals^{n\times 6}$ is such that $\la{y}_k=\text{col}_k\la{Y}=\pi_h z_k$ 
where $z_k\in V_h$ belongs to the $L^2$ orthogonal basis of the space of rigid 
motions. Then
\begin{equation}\label{eq:proj_discrete}
  \begin{aligned}
    &\transp{\la{v}}\la{M}\la{P}\la{u} = (Pu, v) = (u, v) - \sum_{k=1}^{6}(u, z_k)(v, z_k) =
  \transp{\la{V}}\la{M}\left(\la{I}-\la{Y}\transp{\la{Y}}\la{M}\right)\la{u},\\
    &\transp{\la{f}}\transp{\dual{\la{P}}}\la{v}=
    \brack{f, Pv} = \brack{f, v} - \sum_{k=1}^{6}\brack{f, z_k}(v, z_k) =
    \transp{\la{f}}\left(\la{I}-\la{Y}\transp{\la{Y}}\la{M}\right)\la{v}
  \end{aligned}
\end{equation}
and $\la{P}=\left(\la{I}-\la{Y}\transp{\la{Y}}\la{M}\right)$ is the representation 
of $P$ while $\dual{P}$ is represented by $\transp{\la{P}}$. We remark that in
addition to $\la{Y}$, the rigid motions $Z_h=\spn{\set{z_k}_{k=1}^6}$ can be
represented in $\reals^n$ by an additional matrix $\la{W}=\la{M}\la{Y}$, which is
$\mu_h$ applied to functionals $v\mapsto(z_k, v)$. Following \cite{kent} the
matrices $\la{Y}$, $\la{W}$ are termed respectively the primal and dual
representation of $Z_h$. Observe that in \eqref{eq:proj_discrete} matrix $\la{P}$ 
uses the primal representation for $\la{u}$ while the vector is expanded in the dual
representation by $\dual{\la{P}}$. Moreover, $L^2$ orthogonality of $Z_h$ yields
$\transp{\la{y}_i}\la{w}_j=\delta_{ij}$. Finally note that the projectors
$\transp{\la{P}}$, $\la{P}$ are implicitly present in the linear system which is 
the discretization of the multiplier problem \eqref{eq:weak} with the orthogonal 
basis of rigid motions
\begin{equation}\label{eq:AAxb}
  \begin{pmatrix}
  \la{A} & \la{W}\\
  \transp{\la{W}} &
  \end{pmatrix}\begin{pmatrix}\la{u}\\
                              \la{p}\end{pmatrix}=
  \begin{pmatrix}\la{b}\\
                 \la{0}\end{pmatrix}.
\end{equation}
Indeed, $\la{p}=\transp{\la{Y}}\la{b}$ from premultiplying the first equation by 
$\transp{\la{Y}}$. Upon substitution the equation reads 
$\la{A}\la{u}=\la{b}-\la{W}\transp{\la{Y}}\la{b}=\transp{\la{P}}\la{b}$. Further
the solution is such that $\la{P}\la{u}=0$.

The situation where the continuous problems \eqref{eq:weak}, \eqref{eq:cg_zperp}
and the discrete problem \eqref{eq:AAxb} use different projectors for the left
and right hand sides contrasts with \eqref{eq:Axb} which utilizes 
$\la{P}_Z$ to obtain consistent right hand side and the solution is such that 
$\la{P}_Z\la{u}=0$ as well. This observation together with the lack of convergence of the
CG method, cf. Table \ref{tab:krylov_lagrange_cmp}, motivate that the CG method on
\eqref{eq:Axb} is used with the following two modifications: (i) the iterations
are started from vector $\transp{\la{P}}\la{b}$, (ii) $\la{P}$ is applied to the
final solution.

The effect of the proposed modifications is shown in Table
\ref{tab:improved_krylov}. The problem from Example \ref{ex:pin_elasticity} is
considered on a non-uniform mesh and CG on \eqref{eq:Axb} is applied with
different combinations of projectors used to obtain the right hand side from 
incompatible vector $\la{b}$ and to orthogonalize the converged solution. 
We observe that only the case $(\transp{\la{P}}, \la{P})$\footnote{
Elements of the tuple denote respectively the projector for the right hand
side and the left hand side.
}
yields optimal convergence. With $(\la{P}_Z, \la{P})$ the rate is slightly smaller
than one. In the remaining two cases the solution do not converge suggesting
that for convergence $\la{P}$ must be applied to the solution.

\begin{table}
  \begin{center}
  \caption{
Convergence of conjugate gradient solutions for \eqref{eq:Axb} with different 
combinations of right hand (horizontal) side and left hand side (vertical)
projectors. The problem from Example \ref{ex:pin_elasticity} is considered.
Preprocessing the right hand side and postprocessing the solution by projectors 
$(\transp{\la{P}}, \la{P})$ yields solutions converging at optimal rate.
  }
\footnotesize{
\begin{tabular}{c|l|ccc||ccc}
\hline
  & \multirow{2}{*}{size} & \multicolumn{3}{c||}{$\la{P}_Z$} & \multicolumn{3}{c}{$\transp{\la{P}}$}\\
  \cline{3-8}
  & & $\norm{u-u_h}_1$ & \# & $\max_Z|(u_h, z)|$ & $\norm{u-u_h}_1$ & \# & $\max_Z|(u_h, z)|$ \\
\hline
  \multirow{4}{*}{\rotatebox[origin=c]{90}{$\la{P}_Z$}}
  & 13074   & 5.51E-02 (0.45) & 26 & 6.06E-03 & 5.53E-02 (0.44) & 27 & 6.05E-03\\
  & 98046   & 5.05E-02 (0.12) & 27 & 6.32E-03 & 5.11E-02 (0.12) & 28 & 6.31E-03\\
  & 759540  & 5.00E-02 (0.02) & 29 & 6.43E-03 & 5.06E-02 (0.01) & 29 & 6.42E-03\\
  & 5978829 & 4.98E-02 (0.01) & 31 & 6.49E-03 & 5.05E-02 (0.00) & 31 & 6.48E-03\\
\hline
\hline
  \multirow{4}{*}{\rotatebox[origin=c]{90}{$\la{P}$}}
  & 13074   & 3.13E-02 (0.98) & 27 & 6.84E-16 & 3.11E-02 (0.99) & 25 & 6.15E-16\\
  & 98046   & 1.45E-02 (1.11) & 28 & 2.94E-14 & 1.41E-02 (1.14) & 27 & 2.92E-14\\
  & 759540  & 6.92E-03 (1.07) & 29 & 6.39E-14 & 6.53E-03 (1.11) & 29 & 6.40E-14\\
  & 5978829 & 3.63E-03 (0.93) & 31 & 2.89E-13 & 3.20E-03 (1.03) & 31 & 2.86E-13\\
\hline
\end{tabular}
}
\label{tab:improved_krylov}
\end{center}
\end{table}

The results shown in Table \ref{tab:improved_krylov} are satisfactory in a sense that 
preprocessing the right hand side with $\transp{\la{P}}$ and postprocessing
the solution with $\la{P}$ improved the convergence properties of the CG method for
\eqref{eq:Axb}. However, the modifications alter the original discrete problem 
and thus the properties of the new problem should be discussed. We note that in
the discussion $\la{Z}$, $\la{Y}$ are respectively $\la{I}$ and $\la{M}$ orthogonal
basis of the nullspace of $\la{A}$. Further, the transformation matrix between
the basis is $c\in\reals^{6\times 6}$ such that $\la{Z}=\la{Y}c$ and we have 
$\transp{\la{Y}}\la{M}\la{Z}=c$.

First, admissibility of the modified right hand side $\transp{\la{P}}\la{b}$ 
is considered. Using the transformation matrix it holds that $\transp{\la{Z}}\transp{\la{P}}\la{b}=0$ 
and thus $\transp{\la{P}}\la{b}$ is compatible and the solution can be 
obtained by a pseudoinverse (or equivalently by CG). The computed solution of the new linear 
system then satisfies $\transp{\la{Z}}\la{u}=0$. However, the continuous problem
requires orthogonality $\transp{\la{Y}}\la{M}\la{u}=Ch$. As the two conditions 
are related through
$
  \semi{\transp{\la{Y}}\la{M}\la{u}}^2
  = 
  \transp{\la{u}}\la{MZ}\inv{(\transp{c}c)}\transp{\la{Z}}\la{M}\la{u}
  =
  \transp{\la{u}}\la{MZ}\inv{(\transp{\la{Z}}\la{M}\la{Z})}\transp{\la{Z}}\la{M}\la{u},
$
and $\transp{\la{Z}}\la{Z}=\la{I}$, orthogonality in the $L^2$ inner product 
depends on similarity of the mass matrix with identity. This is essentially a
condition on the mesh and $\semi{\transp{\la{Y}}\la{M}\la{Z}}\geq C$ is possible 
(as observed in Table \ref{tab:improved_krylov}).

To enforce orthogonality constraint $\transp{\la{Y}}\la{M}\la{u}=0$ without
postprocessing we shall finally consider linear system $\la{Au}=\transp{\la{P}}\la{b}$
and require $\la{P}\la{u}=0$ for uniqueness. In this case the solution is not
provided by pseudoinverse $\la{B}_A$. However, a similar construction based on
the generalized eigenvalue problem can be used instead.

\begin{lemma} Let $\la{u}$ be a unique solution of $\la{Au}=\transp{\la{P}}\la{b}$, 
  satisfying $\la{P}\la{u}=0$ and $\la{\Gamma}\in\reals^{n\times n}$, 
  $\la{U}\in\reals^{n\times n-6}$ such that 
  $\la{A}\la{U} = \la{M}\la{U}\la{\Gamma}$, $\transp{\la{U}}\la{M}\la{U}=\la{I}$. Then 
  $\la{u}=\la{B}\transp{\la{P}}\la{b}$ where $\la{B}=\la{U}\inv{\la{\Gamma}}\transp{\la{U}}$.
\end{lemma}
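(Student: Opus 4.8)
The plan is to verify that the matrix $\la{B}=\la{U}\inv{\la{\Gamma}}\transp{\la{U}}$ produces the desired solution by decomposing an arbitrary vector in the generalized eigenbasis and checking the two defining properties of $\la{u}$. First I would observe that the generalized eigenvalue problem $\la{A}\la{U}=\la{M}\la{U}\la{\Gamma}$ together with $\transp{\la{U}}\la{M}\la{U}=\la{I}$ produces, alongside the $n-6$ columns of $\la{U}$ (with positive eigenvalues $\gamma_i$, since $\la{A}$ is symmetric positive semidefinite and $\la{M}$ is symmetric positive definite), exactly $6$ generalized eigenvectors with eigenvalue $0$; these span the nullspace of $\la{A}$ and may be collected as the columns of $\la{Y}$ (the $\la{M}$-orthonormal basis of rigid motions appearing in the definition of $\la{P}$). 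Thus $\{\la{u}_i\}\cup\{\la{y}_k\}$ forms an $\la{M}$-orthonormal basis of $\reals^n$, and any $\la{v}\in\reals^n$ satisfies $\la{v}=\sum_i(\transp{\la{u}_i}\la{M}\la{v})\la{u}_i + \sum_k(\transp{\la{y}_k}\la{M}\la{v})\la{y}_k$, while $\la{P}=\la{I}-\la{Y}\transp{\la{Y}}\la{M}$ is precisely the $\la{M}$-orthogonal projector onto $\spn\set{\la{u}_i}$.

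Next I would establish existence and uniqueness of $\la{u}$. Since $\transp{\la{P}}\la{b}=\la{b}-\la{M}\la{Y}\transp{\la{Y}}\la{b}$ and $\transp{\la{Z}}\transp{\la{P}}\la{b}=0$ (using $\la{Z}=\la{Y}c$ and $\transp{\la{Y}}\la{M}\la{Z}=c$ from the preceding discussion), the right-hand side lies in the range of $\la{A}$, so $\la{A}\la{u}=\transp{\la{P}}\la{b}$ is solvable; the general solution differs by an element of $\ker\la{A}=\spn\set{\la{y}_k}$, and the extra condition $\la{P}\la{u}=0$, i.e. $\la{u}\in\spn\set{\la{y}_k}$, would force $\la{u}=0$ — but $\transp{\la{P}}\la{b}\neq 0$ in general, so instead the correct uniqueness condition pins down the $\ker\la{A}$-component. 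Here one must be slightly careful: $\la{P}\la{u}=0$ means $\la{u}\in\spn\set{\la{y}_k}$, which combined with $\la{A}\la{u}=\transp{\la{P}}\la{b}\in\spn\set{\la{u}_i}$ and $\la{A}$ injective on $\spn\set{\la{u}_i}$ forces — wait — one sees the intended reading is that $\la{u}$ is sought in $\spn\set{\la{u}_i}$ so that $\la{P}\la{u}=\la{u}$; I would state this carefully, matching the continuous constraint $\transp{\la{Y}}\la{M}\la{u}=0$ which is exactly $\la{P}\la{u}=\la{u}$, equivalently that $\la{u}$ has no $\ker\la{A}$ component.

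Granting that normalization, the verification is a short computation. I would expand $\transp{\la{P}}\la{b}=\sum_i\beta_i\la{u}_i$ with $\beta_i=\transp{\la{u}_i}\la{M}\transp{\la{P}}\la{b}$ (the $\la{y}_k$-coefficients vanish by compatibility), then apply $\la{B}=\la{U}\inv{\la{\Gamma}}\transp{\la{U}}$: using $\transp{\la{U}}\la{M}\la{u}_i=e_i$ is not quite the right pairing, so I would instead note $\la{B}\la{M}\la{u}_i=\la{U}\inv{\la{\Gamma}}\transp{\la{U}}\la{M}\la{u}_i=\gamma_i^{-1}\la{u}_i$ and, more to the point, $\la{B}\la{u}_i$ — careful: $\transp{\la{P}}\la{b}$ is a ``functional-side'' vector, so the natural identity is $\la{A}(\la{B}\transp{\la{P}}\la{b})=\la{M}\la{U}\la{\Gamma}\inv{\la{\Gamma}}\transp{\la{U}}\transp{\la{P}}\la{b}=\la{M}\la{U}\transp{\la{U}}\transp{\la{P}}\la{b}$, and one checks $\la{M}\la{U}\transp{\la{U}}$ acts as the identity on $\spn\set{\la{u}_i}$ in the appropriate sense; equivalently $\la{A}\la{B}=\la{I}-\la{M}\la{Y}\transp{\la{Y}}=\transp{\la{P}}$ as operators, whence $\la{A}\la{B}\transp{\la{P}}\la{b}=\transp{\la{P}}\transp{\la{P}}\la{b}=\transp{\la{P}}\la{b}$ since $\transp{\la{P}}$ is idempotent, and $\la{B}\transp{\la{P}}\la{b}\in\spn\set{\la{u}_i}$ so $\la{P}(\la{B}\transp{\la{P}}\la{b})=\la{B}\transp{\la{P}}\la{b}$. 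By the uniqueness just argued, $\la{u}=\la{B}\transp{\la{P}}\la{b}$. The main obstacle is purely bookkeeping: keeping straight which vectors live on the primal ($V_h$) side versus the dual ($\dual{V_h}$) side, so that the $\la{M}$-weighted orthogonality relations and the identities $\la{A}\la{B}=\transp{\la{P}}$, $\la{B}\la{A}=\la{P}$ are applied with the correct transposes; once the bases and pairings are set up, the algebra is immediate.
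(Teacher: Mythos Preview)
Your approach is essentially the paper's: both verify that $\la{B}\transp{\la{P}}\la{b}$ lies in the column space of $\la{U}$ (so the orthogonality constraint is met) and that $\la{A}\la{B}$ acts as the identity on the subspace $\spn(\la{M}\la{U})$; your global identity $\la{A}\la{B}=\transp{\la{P}}$ is just a cleaner packaging of the paper's computation $\la{A}\la{B}\la{M}\la{U}=\la{M}\la{U}$ combined with $\la{M}\la{U}\transp{\la{U}}\la{b}=\transp{\la{P}}\la{b}$. You are also right to flag ``$\la{P}\la{u}=0$'' in the statement as a slip for $\la{P}\la{u}=\la{u}$ (equivalently $\transp{\la{Y}}\la{M}\la{u}=0$); the paper's own proof carries the same slip where it writes ``$\la{P}\la{B}\la{b}=0$'' but means $\la{P}\la{B}\la{b}=\la{B}\la{b}$.
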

\begin{proof}
First, note that the existence of matrices $\la{U}$, $\la{\Gamma}$ follows from
  positive semi-definiteness of $\la{A}$. Further, by $\la{M}$ orthogonality of the eigenvectors
$\la{M}\la{U}\la{x}=\transp{\la{P}}\la{b}$ holds with $\la{x}=\transp{\la{U}}\la{b}$.
  As $\transp{\la{Y}}\la{M}\la{U}=0$ any vector $\la{B}\la{b}$ is $\la{M}$ orthogonal with 
  $\la{Y}$ and thus $\la{P}\la{B}\la{b}=0$. It remains to show that the
  composition $\la{A}\la{B}$ is the identity on the subspace spanned by columns
  of $\la{M}\la{U}$
  \[
  \la{A}\la{B}\la{M}\la{U}
  =
  \la{A}  \la{U}\inv{\la{\Gamma}}\transp{\la{U}} \la{M}\la{U}
  =
  \la{A}  \la{U} \inv{\la{\Gamma}}
  =
  \la{M}\la{U}\la{\Gamma} \inv{\la{\Gamma}}
  =
  \la{M}\la{U}.
  \]
\end{proof}

\section{Natural norm formulation}\label{sec:energy}

An attractive feature of the variational problem \eqref{eq:cg_long} is the fact
that the resulting linear system is amenable to solution by the CG method, which
when modified following \S \ref{sec:krylov} yields converging solutions.
However, the projectors $\dual{P}$, ${P}$ are only applied as pre and
postprocessor and the CG loop is in this respect detached from
the continuous problem. Moreover the method requires a special preconditioner that
handles the nullspace of matrix $\la{A}$. A formulation which leads to a
positive definite linear system requiring only a regular (not nullspace aware)
preconditioner shall be studied next.

\begin{theorem} Let $a:V\times V\rightarrow\reals$, 
  $a(u, v)=2\mu(\epsilon(u), \epsilon(v))+\lambda(\nabla\cdot u, \nabla\cdot v)$
  and let $l\in\Zpolar$. There exists a unique $u\in V$ satisfying 
  \begin{equation}\label{eq:energy}
    a(u, v) + (u_Z, v_Z) = \brack{l, v} \quad \forall v\in V.
  \end{equation}
  Moreover $u\in\Zperp$.
\end{theorem}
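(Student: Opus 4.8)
The plan is to recognize \eqref{eq:energy} as a symmetric coercive variational problem posed on all of $V$, solve it by the Lax--Milgram lemma, and then read off the membership $u\in\Zperp$ by testing against rigid motions.

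First I would introduce the bilinear form $\tilde{a}(u,v)=a(u,v)+(u_Z,v_Z)$ and observe that it is precisely the inner product inducing the norm $\norm{\cdot}_E$ from \eqref{eq:Vnorm}: indeed $\brack{Au,v}=a(u,v)$ and $\brack{Yu,v}=(u_Z,v_Z)$, so $\tilde{a}(u,v)=(u,v)_E$ and $\tilde{a}(u,u)=\norm{u}_E^2$. By Lemma~\ref{lm:inners} the $E$-norm is equivalent to the $H^1$ norm, so there are constants $0<c\le C$ with
\[
 c\norm{u}_1^2 \le \tilde{a}(u,u) \le C\norm{u}_1^2 \qquad \forall u\in V,
\]
which is the coercivity estimate. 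Boundedness follows from the Cauchy--Schwarz inequality for the inner product $\tilde{a}$ together with the same equivalence: $\tilde{a}(u,v)\le \norm{u}_E\norm{v}_E\le C\norm{u}_1\norm{v}_1$. Since $\Zpolar\subset\dual{V}$, the map $v\mapsto\brack{l,v}$ is a bounded linear functional on $V$, and the Lax--Milgram lemma (or, $\tilde{a}$ being symmetric, the Riesz representation theorem) delivers a unique $u\in V$ solving \eqref{eq:energy}.

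Next I would establish $u\in\Zperp$. Take $v=z$ for an arbitrary $z\in Z$. Then $\epsilon(z)=0$ and $\nabla\cdot z=0$, so $a(u,z)=0$, while $z_Z=z$; hence \eqref{eq:energy} reduces to $(u_Z,z)=\brack{l,z}$. Because $l\in\Zpolar$ the right-hand side vanishes, so $(u_Z,z)=0$ for every $z\in Z$. Since $u_Z\in Z$, the choice $z=u_Z$ forces $\norm{u_Z}^2=0$, i.e. $u_Z=0$, which is exactly $u\in\Zperp$.

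The main obstacle, such as it is, is the coercivity of $\tilde{a}$ on all of $V$, but this has already been done in Lemma~\ref{lm:inners}; the rest is routine. It is worth noting, as a by-product, that on $\Zperp$ the problem \eqref{eq:energy} coincides with \eqref{eq:cg_zperp}: for $v\in\Zperp$ the term $(u_Z,v_Z)$ drops (as $u_Z=0$) and $\brack{l,v}=\brack{\dual{P}l,v}$, so the $u$ produced here is the unique solution guaranteed by Theorem~\ref{thm:cg}, which ties this positive-definite formulation back to the earlier ones.
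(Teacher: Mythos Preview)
Your proof is correct and follows essentially the same route as the paper: identify the bilinear form with the inner product $(\cdot,\cdot)_E$, invoke Lemma~\ref{lm:inners} for norm equivalence and hence Lax--Milgram, then test with $v\in Z$ to obtain $u\in\Zperp$. Your write-up is more detailed, and the closing remark linking back to \eqref{eq:cg_zperp} is exactly the observation the paper makes immediately after the proof.
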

\begin{proof} Recall that the bilinear form above is the inner product $(u, v)_E$
  from \eqref{eq:Vinner} which induces an equivalent norm on $V$, cf. Lemma \ref{lm:inners}. 
  The existence and uniqueness of the solution now follow from the Lax-Milgram lemma.
  Testing the equation with $v=z\in Z$ yields $(u, z)=0$ and in turn
  $u\in\Zperp$.
\end{proof}
We remark that the solution of \eqref{eq:energy} and \eqref{eq:cg_zperp} are
equivalent because $l\in\Zpolar$. Note also that Theorem \ref{thm:m_norm} gives equivalence bounds
$\inv{(1+C)}\norm{u}^2_M\leq \norm{u}^2_E \leq \norm{u}^2_M$ for all $u\in V$ and
$C=C(\Omega)$. In turn the Riesz map with respect to the inner product 
$(u, v)_M=a(u, v)+(u, v)$ defines a suitable $h$ robust preconditioner for
\eqref{eq:energy}. Finally, observe that the $L^2$ orthogonality of decomposition 
$u=u_Z+u_{\Zperp}$ is respected by the inner product $(\cdot,
\cdot)_E$, see \eqref{eq:Vinner}. The norm $\norm{u}_E$, see \eqref{eq:Vnorm}, thus 
considers $Z$ and $\Zperp$ with $L^2$ norm and $a$ induced norm which are the natural 
norms for the spaces.

Using \eqref{eq:proj_discrete} the natural norm formulation \eqref{eq:energy}
leads to a positive definite linear system
\[
  \left[\la{A} + \la{M}\la{Y}\transp{\left(\la{M}\la{Y}\right)}\right]\la{u}
  =
  \transp{\la{P}}\la{b}.
\]
where we recognize a dense matrix from the discretization of $\mathcal{B}_E$ 
preconditioner of the Lagrange multiplier formulation, cf. Theorem \ref{thm:e_norm}. 
Therein the inverse of the matrix was of interest. However, relevant for the CG 
method here is only the matrix vector product, which can be computed efficiently 
by storing separately $\la{A}$ and $\la{MY}$, the dual representation of rigid 
motions in $V_h$.

With \eqref{eq:energy} we finally revisit the test problem from Example
\ref{ex:pin_elasticity}. Results of the method are summarized in Table
\ref{tab:energy}. Optimal convergence rate is observed with both uniform and
nonuniform meshes. In the uniform case CG iteration count with the proposed Riesz map
preconditioner approximated by $\text{AMG}(\la{A}+\la{M})$ remains bounded. There is
a slight growth in the refined case. An interesting observation is the fact that the 
error in the orthogonality constraint is smaller in comparison to the Lagrange multiplier 
formulation, cf. Table \ref{tab:krylov_lagrange_cmp}.

\begin{table}
  \begin{center}
  \caption{
    Convergence study of the natural norm formulation \eqref{eq:energy} for the
    singular elasticity problem from Example \ref{ex:pin_elasticity}. The system
    is solved with relative tolerance $10^{-11}$. The CG method uses
    preconditioner $\text{AMG}(\la{A}+\la{M})$. Iterations count are bounded in 
    the uniform case while a slight growth can be seen in the refined one.
    The solutions converge at optimal rate.
  }
\footnotesize{
\begin{tabular}{l|lcc||l|lcc}
\hline
  \multicolumn{4}{c||}{uniform} & \multicolumn{4}{c}{refined}\\
  \hline
  size  & $\norm{u-u_h}_1$ & \# & $\max_Z|(u_h, z)|$ & size & $\norm{u-u_h}_1$ & \# & $\max_Z|(u_h, z)|$ \\
\hline
14739   & 1.03E-02 (1.14) & 33 & 2.57E-08 & 13074   & 3.11E-02 (0.99) & 39 & 3.70E-08\\
107811  & 4.84E-03 (1.09) & 29 & 1.80E-05 & 98046   & 1.41E-02 (1.14) & 41 & 3.46E-08\\
823875  & 2.36E-03 (1.03) & 37 & 9.23E-09 & 759540  & 6.53E-03 (1.11) & 43 & 8.90E-08\\
6440067 & 1.18E-03 (1.00) & 33 & 2.38E-05 & 5978829 & 3.20E-03 (1.03) & 46 & 3.53E-08\\
\hline
\end{tabular}
}
\label{tab:energy}
\end{center}
\end{table}

\section{Nearly incompressible materials}\label{sec:mixed}
So far we have assumed that $\mu$ and $\lambda$ are comparable in magnitude. 
In this section we handle the case where $\lambda \gg \mu$ and the 
material is nearly incompressible. The variational problems \eqref{eq:weak_long}, 
\eqref{eq:cg_long}, \eqref{eq:energy} studied thus far were based on the pure displacement 
formulation of linear elasticity \eqref{eq:strong} and $H^1$ conforming finite 
element spaces were used for their discretization. Due to the \textit{locking} 
phenomenon the approximation properties of their respected solutions are known 
to degrade for nearly incompressible materials with $\lambda\gg\mu$, 
(equivalently Poisson ratio close to 1/2), see e.g. \cite[ch 6.3]{braess}. 
Moreover, the incompressible limit presents a difficulty for convergence of 
iterative methods in the standard form.  

Methods robust with respect to increasing $\lambda$ can be 
formulated using a discretization with nonconforming elements, \cite[ch 11.4]{brenner}. 
However, this method fails to satisfy the Korn's inequality. To the authors'
knowledge the only 
primal 
conforming finite element method that 
is both robust in $\lambda$ and satisfies Korn's inequality is \cite{mardal2002robust, mardal2006observation}. 
\added[id=rev1, remark={comment 3}]{
In addition to problems with the discretization, standard multigrid algorithms do not work
well for large $\lambda$ and special purpose algorithms must be used \cite{schoberl1999multigrid}.   
Related discontinuous Galerkin formulation based on H(div)-conforming elements are descibed in 
\cite{hong2016robust} where also a H(div) multigrid method is introduced.
}
For this reason we resort to a more straightforward solution of the mixed formulation
where an additional variable, the \textit{solid pressure} $p$, is introduced. 
Let the solid pressure be defined as $p=\lambda \Div u$ so that
\eqref{eq:weak_long} is reformulated as
\begin{equation}\label{eq:mixed_strong}
  \begin{aligned}
 &\Div{(2\mu\epsilon(u))} - \nabla p = f&\quad\text{ in }\Omega,\\
 &\lambda\Div u - p                  = 0&\quad\text{ in }\Omega,\\
 &\sigma(u)\cdot n = h                   &\mbox{ on }\partial\Omega.\\
  \end{aligned}
\end{equation}
Note that the problem is singular, since any pair
$z\in Z$, $p=0$ can be added to the solution. In fact such pairs constitute the
kernel of \eqref{eq:mixed_strong}. To obtain a unique solution we shall as in \S
\ref{sec:lagrange}, require that $u$ is orthogonal to the rigid motions $Z$.

Setting $Q=L^2(\Omega)$ we shall consider a variational problem
for triplet $u\in V$, $p\in Q$, $\nu\in Z$ such that
\begin{equation}\label{eq:robust_weak}
  \begin{aligned}
    &2\mu(\epsilon(u), \epsilon(v)) + (p, \Div{v}) + (\nu, v) = \brack{l, v}\quad&\forall v\in V,\\
&(q, \Div{u}) - \lambda^{-1}(p, q)                           = 0 \quad&\forall q\in Q,\\
    &(\eta, u)                                                 = 0 \quad&\forall \eta \in Z.\\
  \end{aligned}
\end{equation}
Equation \eqref{eq:robust_weak} defines a double saddle point problem
\[
  \mathcal{A}
  \begin{pmatrix}u\\ p\\ \nu\end{pmatrix}
  =
  \begin{pmatrix}
       A &              B &           D\\
\dual{B} & -\lambda^{-1}C & \phantom{0}\\
\dual{D} & \phantom{0}    & \phantom{0}
  \end{pmatrix}
  \begin{pmatrix}u\\ p\\ \nu\end{pmatrix}
  =
  \begin{pmatrix}l \\ 0 \\ 0\end{pmatrix}
\]
with operators $A:V\rightarrow\dual{V}$, $B:Q\rightarrow\dual{V}$, 
$C:Q\rightarrow\dual{Q}$, $D:Z\rightarrow\dual{V}$ and functional $l: V\rightarrow\mathbb{R}$ defined as
\begin{equation}\label{eq:op_def}
\begin{aligned}
&\brack{Au, v}=2\mu(\epsilon(u), \epsilon(v)), &\quad\quad\brack{Bp, v}=(p, \nabla\cdot v),\\
  &\brack{Cp, q}=(p, q),                       &\quad\quad\brack{D\eta, v}=(\eta, v)
\end{aligned}
\end{equation}
and
\begin{equation}\label{eq:l_def}
\brack{l, v} = (f, v) + (h, v)_{\partial\Omega}.
\end{equation}

To show well-posedness of the constrained mixed formulation \eqref{eq:robust_weak} 
the abstract theory for saddle points problems with small (note that that $\lambda\gg 1$) 
penalty terms \cite[ch 3.4]{braess} is applied. To this end we introduce
the bilinear forms $a(u, v)=\brack{Au, v}$,
\begin{equation}\label{eq:pen_infsup}
  b(v, (p, \eta))=\brack{Bp, v}+\brack{D\eta, v},
\end{equation}
$c((p, \eta), (q, \eta))=\brack{Cp, q}$ so that \eqref{eq:robust_weak} is recast
as: Find $u\in V$, $(p, \nu)\in Q\times Z$ satisfying
\begin{equation}\label{eq:penalty}
  \begin{aligned}
    &a(u, v) + b(v, (p, \nu))            = \brack{l, v} \quad&\forall v\in V,\\
    &b(u, (q, \eta)) - \lambda^{-1}(p, q) = 0               \quad&\forall (q, \eta)\in Q\times Z.
  \end{aligned}
\end{equation}
The space $Q\times Z$ will be considered with the norm 
$\norm{(p, \eta)}=\sqrt{\norm{p}^2+\norm{\eta}^2}$, while $V$ is considered with the $H^1$ norm.
Following \cite[thm 4.11]{braess} the problem \eqref{eq:penalty} is well-posed
provided that the assumptions of Brezzi theory hold and in addition $c$ is
continuous and $c$ and $a$ are positive
\[
a(u, u)\geq 0 \quad\forall u\in V,
\quad\quad
\text{and}
\quad\quad
c((p, \eta), (p, \eta))\geq 0 \quad\forall (p, \eta)\in Q\times Z.
\]

We review that continuity and $V$-ellipticity of $a$ on $\Zperp$ was shown in 
Theorem \ref{thm:h1_norm} and as $a(z, z)=0$, $z\in Z$, the form is positive on $V$. 
Moreover, by Lemma \ref{lm:help} and Cauchy-Schwarz 
\[
  \begin{split}
    b(v, (p, \eta)) = (p, \Div v)+(v, \eta) 
    \leq \sqrt{3}\norm{p}\norm{\nabla v} + \norm{v}\norm{\eta}
    &\leq \sqrt{3}\sqrt{\norm{v}^2+\norm{\nabla v}^2}\sqrt{\norm{p}^2+\norm{\eta}^2}\\
    &\leq \beta^{*}\norm{v}_1\norm{(p, \eta)}
  \end{split}
\]
holds for any $v\in V$, $(p, \eta)\in Q\times Z$.
It is easy to observe that continuity and positivity of the bilinear form $c$ 
hold and thus \eqref{eq:penalty} is well-posed provided that the inf-sup
condition is satisfied. We note that the proof requires extra regularity of the
boundary.

\begin{lemma}\label{lm:penalty}
  Let $\Omega$ with a smooth boundary and $b$ be the bilinear form over 
  $V\times(Q\times Z)$ defined in
  \eqref{eq:pen_infsup}. There exists $\beta_*=\beta_*(\Omega)$ such that
  \[
    \sup_{v\in V}\frac{b(v, (p, \eta))}{\norm{v}_1}\geq\beta_*\norm{(p, \eta)}
    \quad\forall (p, \eta)\in Q\times Z.
  \]
\end{lemma}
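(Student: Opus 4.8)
The plan is to prove the inf-sup bound by constructing, for each given pair $(p,\eta)\in Q\times Z$, a single test function $v\in V$ that simultaneously activates the pressure term $(p,\Div v)$ and the rigid-motion term $(v,\eta)$ in $b$. Two elementary facts make this work. First, every rigid motion is divergence free: since $\epsilon(\eta)=0$ we have $\Div{\eta}=\tr{(\epsilon(\eta))}=0$, so adding a multiple of $\eta$ to a test function leaves the pressure term untouched. Second, by \eqref{eq:help3} in Lemma~\ref{lm:help} there is $C=C(\Omega)$ with $\norm{\eta}_1\leq C\norm{\eta}$ for every $\eta\in Z$, so $\eta$ itself is an $H^1$-cheap probe for the term $(v,\eta)$.

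The one nontrivial ingredient is a bounded right inverse of the divergence. First I would show that for every $p\in Q=L^2(\Omega)$ there exists $v_p\in V$ with $\Div{v_p}=p$ and $\norm{v_p}_1\leq C_1\norm{p}$, $C_1=C_1(\Omega)$. On a domain with smooth boundary one may take $v_p=\nabla\varphi$ where $\varphi$ solves the Dirichlet problem $\Delta\varphi=p$ in $\Omega$, $\varphi=0$ on $\partial\Omega$: elliptic regularity gives $\norm{\varphi}_{H^2(\Omega)}\leq C_1\norm{p}$, hence $v_p\in[H^1(\Omega)]^3$ with the stated bound and $\Div{v_p}=\Delta\varphi=p$. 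This is precisely where the smoothness hypothesis enters (a Lipschitz boundary would also do, via the Ne\v{c}as inequality, but the $H^2$ argument is the shortest). Note that since no boundary condition is imposed on $V$, there is no need to restrict $Q$ to mean-zero functions.

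With these two facts at hand I would test with $v=v_p+t\eta$ for a constant $t>0$ to be fixed below. Since $\Div{\eta}=0$ we have $\Div{v}=p$, hence
\[
b(v,(p,\eta))=\norm{p}^2+(v_p,\eta)+t\norm{\eta}^2
\geq\norm{p}^2-C_1\norm{p}\norm{\eta}+t\norm{\eta}^2
\geq\tfrac{1}{2}\norm{p}^2+\bigl(t-\tfrac{C_1^2}{2}\bigr)\norm{\eta}^2,
\]
the last step being Young's inequality. Taking $t=\tfrac{C_1^2+1}{2}$ gives $b(v,(p,\eta))\geq\tfrac{1}{2}\norm{(p,\eta)}^2$, while
\[
\norm{v}_1\leq\norm{v_p}_1+t\norm{\eta}_1\leq C_1\norm{p}+tC\norm{\eta}
\leq\sqrt{2}\,\max(C_1,tC)\,\norm{(p,\eta)}.
\]
Dividing the two bounds yields the claim with $\beta_*=\bigl(2\sqrt{2}\,\max(C_1,tC)\bigr)^{-1}$, which depends only on $\Omega$.

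The only genuine obstacle is the first step, the bounded right inverse of the divergence, which is where the regularity of $\partial\Omega$ is needed; everything after it is the routine pairing of a divergence-correcting field with the rigid motion itself, together with a Young inequality.
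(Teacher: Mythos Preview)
Your proof is correct and follows the same overall strategy as the paper: construct a right inverse of the divergence via the Dirichlet Poisson problem $\Delta\varphi=p$, $\varphi|_{\partial\Omega}=0$, set $v_p=\nabla\varphi$, and test with $v_p$ plus a multiple of $\eta$. The one point where the two arguments diverge is the handling of the cross term $(v_p,\eta)$. You bound it by Cauchy--Schwarz and absorb it with Young's inequality and the free parameter $t$. The paper instead observes that the particular $v_p=\nabla\varphi$ is automatically $L^2$-orthogonal to every rigid motion: integrating by parts, $(\eta,\nabla\varphi)=\int_{\partial\Omega}\varphi\,\eta\cdot n-\int_\Omega\varphi\,\Div\eta=0$ since $\varphi$ vanishes on the boundary and $\Div\eta=0$. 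With this the cross term disappears, the choice $t=1$ already gives $b(v_p+\eta,(p,\eta))=\norm{p}^2+\norm{\eta}^2$, and no Young step is needed in the numerator. Your argument has the advantage of working for \emph{any} bounded right inverse of the divergence, not just the gradient one; the paper's shortcut is tied to the specific construction but yields slightly cleaner constants.
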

\begin{proof}
  Let $p\in Q$ and $\eta\in Z$ given. Following \cite[thm 11.2.3]{brenner} there
  exists for every $p$ a $v^{*}\in V$ such that
  \begin{subequations}
    \begin{align}
      &p=\Div{v^{*}},\label{eq:help_div}\\
      &\norm{v^{*}}_1\leq C(\Omega)\norm{p}.\label{eq:help_bound}
    \end{align}
  \end{subequations}
  The element $v^{*}$ is constructed from the unique solution of the Poisson
  problem
  \begin{equation}\label{eq:help_poisson}
    \begin{aligned}
    -&\Delta w = p&\quad\text{ in }\Omega,\\
          &w = 0&\quad\text{ on }\partial\Omega,
    \end{aligned}
  \end{equation}
  taking $v^*=-\nabla w$. Observe that the computed $v^{*}\in \Zperp$
  \begin{equation}\label{eq:help_og}
  -(z, v^*) = \int_{\Omega}z\nabla w = 
  \int_{\partial\Omega}w z\cdot n - \int_{\Omega}w\Div z= 0 \quad \forall z\in Z.
  \end{equation}
Orthogonality of $v^*$ and \eqref{eq:help_div} yields that $b(v^*+\eta, (p,
  \eta))=(p, \Div v^*)+(\eta, \eta)=\norm{p}^2+\norm{\eta}^2$.
  Further, by Cauchy-Schwarz and Young's inequalities and Lemma \ref{lm:help}
  \[
  \begin{split}
    \norm{v^*+\eta}^2_1 &= \norm{v^*+\eta}^2 + \norm{\nabla(v^*+\eta)}^2\\
     &= \norm{v^*}^2 + \norm{\eta}^2 + \norm{\nabla v^*}^2 + 2(\nabla v^*, \nabla
     \eta)+\norm{\nabla \eta}^2\\
                          &\leq 2\norm{v^*}^2_1 + 2(\norm{\eta}^2+\norm{\nabla \eta}^2)
                          \leq 2\norm{v^*}^2_1 + 2 C(\Omega)\norm{\eta}^2
  \end{split}
  \]
so that $\norm{v^*+\eta}_1 \leq c(\Omega)\norm{(p, \eta)}$. Combining the observations
\[
  \sup_{v\in V}\frac{b(v, (p, \eta))}{\norm{v}_1}
  \geq
  \frac{b(v^*+\eta, (p, \eta))}{\norm{v^* + \eta}_1}
  =
  \frac{\norm{p}^2+\norm{\eta}^2}{\norm{v^* + \eta}_1}
  \geq\frac{1}{c}\sqrt{\norm{p}^2+\norm{\eta}^2}=\frac{1}{c}\norm{(p, \eta)}.
\]
\end{proof}
We remark that none of the constants of the problem \eqref{eq:penalty} depends on 
$\lambda$ despite the norm of $Q\times Z$ being free of the parameter, cf. also
\cite{klawonn_block, klawonn_optimal}. Observe also that with $H^1$ norm on $V$ the
boundedness constant of $a$ depends on $\mu$, cf. Theorem \ref{thm:h1_norm}, and
thus the parameter shall be included in the norm to get a $\mu$ independent
preconditioner. This choice corresponds to considering the space $V$ with the norm 
$u\mapsto\sqrt{2\mu\norm{\epsilon(u)}^2+\norm{u}^2}$.

Motivated by the above, we shall consider as the preconditioner for the
well-posed problem \eqref{eq:penalty} a Riesz map 
$\mathcal{B}:\dual{(V\times Q\times Z)}\rightarrow (V\times Q\times Z)$ with 
respect to the inner product inducing the norm 
$(u, p, \eta)\mapsto\sqrt{2\mu\norm{\epsilon(u)}^2+\norm{u}^2+\norm{p}^2+\norm{\eta}^2}$
\begin{equation}\label{eq:mixed_precond}
  \mathcal{B} = 
  \inv{
  \begin{pmatrix}
    A+M &    & \\
        & C  & \\
        &    & I
  \end{pmatrix}
  },
\end{equation}
where $M$, $I$ were defined respectively in \eqref{eq:Vinner} and \eqref{eq:B1}. 
Similar preconditioners for the Dirichlet problem has been discussed in \cite{klawonn_optimal,
fortin2010iterative}.

\begin{remark}[Lemma \ref{lm:penalty} in the discrete case]\label{rmrk:only}
The continuous inf-sup condition can be extended to Taylor-Hood discretizations
  in the following way.
We consider $V_h\subset V$, $Q_h\subset Q$ approximated with the lowest order 
Taylor-Hood element. Given $p_h\in Q_h$ both the element $v^{*}_h\in V_h$ and 
$w_h\in Q_h$ from Lemma \ref{lm:penalty} are found as the solution to the mixed 
  Poisson problem
\[
\begin{aligned}
  &(v_h^{*}, v) + (\nabla_h w_h, v) = 0&\quad \forall v\in V_h,\\
  &(\nabla_h q, v^{*}_h)            = -(p_h, q) &\quad \forall q\in Q_h.
\end{aligned}
\]
The problem is well-posed due to the weak inf-sup condition
\[
\sup_{v_h\in V_h}\frac{(v_h, \nabla_h q_h)}{\norm{v_h}} \geq C \norm{q_h}_1
  \quad \forall q_h\in Q_h.
\]
Since $z\in V_h$ a direct calculation shows that the orthogonality condition \eqref{eq:help_og} 
is satisfied. 

Both in the above and in the construction of the proof of Lemma \ref{lm:penalty} 
  we relied on a well-posed mixed Poisson problem to obtain orthogonality with 
  respect to the kernel. We note that stable Stokes element $P_2-P_0$ does not 
  allow for such a construction and does not give $h$ uniform bounds.
\end{remark}

To show that the preconditioner \eqref{eq:mixed_precond} is robust with respect to 
$\lambda$, we consider \eqref{eq:robust_weak} with $\mu=1$ and data $h=0$ and $f=u^{*}$
defined in Example \ref{ex:pin_elasticity} while the value of $\lambda$ varies 
in the interval $\left[1,10^{15}\right]$. Moreover, an exactly incompressible case
shall be considered, where the operator $C$ is set to zero.

The spaces $V$ and $Q$ are approximated by lowest order Taylor-Hood elements
for which the discrete inf-sup condition from Lemma \ref{lm:penalty} holds
following Remark \ref{rmrk:only}.

\added[id=rev1, remark={comment 3}]{As with the previous experiments the
  approximate inverse of $A+M$ and $C$ blocks are realized by single multigrid
  $V$ cycle.} The final block corresponding to $Z$ is an 
identity due to the employed orthonormal basis.

The system is solved using the MinRes 
method and absolute tolerance $10^{-8}$ for the preconditioned residual as a
convergence criterion.

From the results of the experiment, summarized in Table \ref{tab:mixed_dx}, it
is evident that the iteration count is bounded in $\lambda$ as well as in the 
discretization parameter. We note that the error in the orthogonality constraint
is comparable to that reported in Table \ref{tab:krylov_lagrange_cmp} for the
Lagrange multiplier formulation of the pure displacement problem.


\begin{table}[ht!]
\setlength\extrarowheight{1.0pt}
\begin{center}
\caption{Iteration counts of the preconditioned MinRes method for mixed linear
  elasticity problem \eqref{eq:robust_weak} and different values of Lam\'{e}
  constant $\lambda$. The exact incompressibility case is denoted by
  $\lambda=\infty$. The iteration counts remain bounded for the considered
  values of the parameter.
}
\footnotesize{
\begin{tabular}{ll|cccccc|c}
\hline
\multirow{2}{*}{$\text{dim}(V)$} & \multirow{2}{*}{$\text{dim}(Q)$} 
                                 & \multicolumn{6}{c|}{$\lambda$}
                                 & \multirow{2}{*}{$\max_{Z, \lambda}\semi{(u_h,z)}$}\\
  \cline{3-8}
                   && $10^{0}$ & $10^{4}$ & $10^{8}$ & $10^{12}$ & $10^{15}$ & $\infty$ &\\
\hline
14739   & 729 & 81 & 87 & 88 & 87 & 88 & 90& 9.56E-07 \\
107811  & 4913 & 78 & 77 & 80 & 79 & 82 & 79& 3.66E-06 \\
823875  & 35937 & 69 & 72 & 72 & 72 & 72 & 72& 4.02E-05 \\
6440067 & 274625 & 67 & 66 & 66 & 66 & 67 & 65& 6.68E-05 \\
\hline
\end{tabular}
}
\label{tab:mixed_dx}
\end{center}
\end{table}

\subsection{\added[id=rev1, remark={comment5}]{
    Single saddle point formulation}
}\label{sec:mixed_pcg}

Using formulation \eqref{eq:penalty} the weak solution of \eqref{eq:mixed_strong} is
computed from a double saddle point problem. However, if considered in $\Zperp\times Q$
the mixed formulation of linear elasticity has just a single saddle point.
A formulation which preserves this property is pursued next.

We begin by observing a few properties of the solution of the double saddle point problem.
\begin{remark}[Properties of solution of \eqref{eq:penalty}]\label{rk:pcg}
(i) In the solution triplet $u\in V$, $p\in Q$, $\nu\in Z$
the rigid motion satisfies $(\nu, z)=\brack{l, z}$ for all $z\in Z$. In particular
$\nu=0$ if and only if $l\in\Zpolar$.
(ii) The triplet $u, p, \nu$ solves \eqref{eq:penalty} if and only if $u, p, 0$ satisfies 
\eqref{eq:penalty} with $l\in\Zpolar$. 
\end{remark}
We note that the first property follows by testing \eqref{eq:penalty}
with $v\in Z$, $p=0$, $\eta=0$ while the second is readily checked
by direct calculation. Note also that if orthonormal basis of the space
of rigid motions is employed the Lagrange multiplier in \eqref{eq:penalty}
is computed simply by evaluating the right hand side.

Due to Remark \ref{rk:pcg} it is only $u\in V$ and $p\in Q$ which are
the non-trivial unknowns of the double saddle point problem \eqref{eq:penalty}.
The pair can be obtained also as a solution of a system with single saddle point. 

\begin{theorem}\label{thm:mixed_single}
  Let $A: V\rightarrow\dual{V}$, $B: Q\rightarrow\dual{V}$, $C: Q\rightarrow\dual{Q}$
  be the operators defined in \eqref{eq:op_def} and $Y: V\rightarrow\dual{V}$ be such
  that $\brack{Yu, v}=(u_Z, v_Z)$ where $V\ni u=u_Z + u_{\Zperp}$ and $u_Z\in Z$, $u_{\Zperp}\in \Zperp$. 
  Then for each $l\in \dual{V}$ there exists unique $u\in V$, $p\in Q$ such that
  \begin{equation}\label{eq:mixed_pcg}
    \mathcal{A}
    \begin{pmatrix}
      u\\
      p\\
    \end{pmatrix}
    =    
    \begin{pmatrix}
      A + Y    & B\\
      \dual{B} & -\lambda^{-1}C 
    \end{pmatrix}
    \begin{pmatrix}
      u\\
      p\\
    \end{pmatrix}
    =
    \begin{pmatrix}
      l\\
      0
    \end{pmatrix}.
  \end{equation}
  Moreover, if $l\in\Zpolar$ then $u\in\Zperp$ and the triplet
  $u, p, 0$ is the unique solution of \eqref{eq:penalty} with the right hand side $l$.
\end{theorem}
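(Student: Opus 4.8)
The plan is to read \eqref{eq:mixed_pcg} as a penalized saddle point problem over $V\times Q$ — the very setting of \cite[thm 4.11]{braess} already used for \eqref{eq:penalty} — with the forms $\tilde a(u,v)=\brack{(A+Y)u,v}=2\mu(\epsilon(u),\epsilon(v))+(u_Z,v_Z)$, $b(v,q)=\brack{Bq,v}=(q,\Div{v})$, and penalty $c(p,q)=\brack{Cp,q}=(p,q)$ weighted by $\lambda^{-1}$. The crucial point is that adding $Y$ makes $\tilde a$ coercive on all of $V$, not merely on $\Zperp$: arguing as in Lemma~\ref{lm:inners}, the decomposition $u=u_Z+u_{\Zperp}$ is $\tilde a$-orthogonal, so $\tilde a(u,u)=\norm{u_Z}^2+2\mu\norm{\epsilon(u_{\Zperp})}^2$, and combining Korn's inequality \eqref{eq:kornZ} on $\Zperp$ with \eqref{eq:help3} on $Z$ yields $\tilde a(u,u)\geq c\norm{u}_1^2$ for some $c=c(\mu,\Omega)>0$. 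Boundedness of $\tilde a$ follows from \eqref{eq:help1}, while $\tilde a$ and $c$ are plainly symmetric, positive, and bounded, so the only remaining hypothesis of the penalized Brezzi theory is an inf-sup condition for $b$.

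For that inf-sup condition I would reuse verbatim the construction from the proof of Lemma~\ref{lm:penalty}: for a smooth boundary and given $p\in Q$, the field $v^{*}=-\nabla w$ with $w$ solving \eqref{eq:help_poisson} satisfies $\Div{v^{*}}=p$ and $\norm{v^{*}}_1\leq C(\Omega)\norm{p}$, whence $\sup_{v\in V}b(v,p)/\norm{v}_1\geq\norm{p}/C(\Omega)$. Since all assumptions hold with constants independent of $\lambda$, the penalized Brezzi theorem gives a unique $(u,p)\in V\times Q$ solving \eqref{eq:mixed_pcg} for every $l\in\dual{V}$, and the argument degenerates gracefully to the incompressible limit $\lambda^{-1}=0$, where the inf-sup of $b$ alone carries it. (For finite $\lambda$ one may alternatively eliminate $p$ through the negative-definite block $-\lambda^{-1}C$ and solve the coercive problem $(A+Y+\lambda B\inv{C}\dual{B})u=l$ by Lax--Milgram, but the penalty route is preferable as it is uniform in $\lambda$.)

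For the second assertion, suppose $l\in\Zpolar$ and pair the first row of \eqref{eq:mixed_pcg} with $v=z\in Z$. Then $\brack{Au,z}=2\mu(\epsilon(u),\epsilon(z))=0$ and $\brack{Bp,z}=(p,\Div{z})=0$, since $\epsilon(z)=0$ forces $\nabla z$ skew and hence $\Div{z}=0$; as $\brack{l,z}=0$ there remains $\brack{Yu,z}=(u_Z,z)=0$ for all $z\in Z$, and choosing $z=u_Z$ gives $u_Z=0$, i.e. $u\in\Zperp$. With $u\in\Zperp$ the term $(u_Z,v_Z)$ drops from the first row of \eqref{eq:mixed_pcg} and $(\eta,u)=0$ for every $\eta\in Z$, so $(u,p)$ satisfies the first two rows of \eqref{eq:penalty} with $\nu=0$ while the third row holds trivially; hence $(u,p,0)$ solves \eqref{eq:penalty} with right-hand side $l$, and its uniqueness is inherited from the well-posedness of \eqref{eq:penalty} already established via Lemma~\ref{lm:penalty}. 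The only genuinely delicate step I anticipate is the inf-sup bound for $b$ on $V\times Q$, as that is precisely where the regularity of $\partial\Omega$ enters through the $H^2$ estimate for \eqref{eq:help_poisson}; everything else is a recombination of Korn's inequalities, Lemma~\ref{lm:help}, and the saddle-point theory already invoked for \eqref{eq:penalty}.
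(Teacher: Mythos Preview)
Your proof is correct and follows essentially the same route as the paper: both invoke the penalized saddle-point theory of \cite[ch.~3.4]{braess}, establish coercivity of $A+Y$ on all of $V$ via the $Z\oplus\Zperp$ decomposition combined with Korn's inequality \eqref{eq:kornZ} and \eqref{eq:help3}, verify the inf-sup condition for $B$ by reusing the construction $v^{*}=-\nabla w$ from Lemma~\ref{lm:penalty}, and then test the first row against $v=z\in Z$ to obtain $u_Z=0$ when $l\in\Zpolar$. Your version is slightly more explicit in justifying $\brack{Au,z}=\brack{Bp,z}=0$ and in noting the alternative Lax--Milgram route for finite $\lambda$, but the structure and key ingredients are identical.
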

\begin{proof}
  We apply the results of \cite[ch 3.4]{braess} for the abstract saddle point
  systems with penalty terms. To this end we observe that operators $A+Y$, $B$, $C$ are
  clearly bounded on the respected spaces, while $C$ is coercive on $Q$.
  The inf-sup condition for $B$ can be verified as in the proof of Lemma \ref{lm:penalty}. Indeed,
  let $v^{*}\in V$ be the element constructed in \eqref{eq:help_poisson}. Then
  by \eqref{eq:help_div} and \eqref{eq:help_bound}
  \[
    \sup_{v\in V}\frac{\brack{B p, v}}{\norm{v}_1}
    =
    \sup_{v\in V}\frac{(p, \nabla\cdot v)}{\norm{v}_1}
    \geq
    \frac{(p, \nabla\cdot v^{*})}{\norm{v^{*}}_1}
    \geq\frac{1}{C(\Omega)}\norm{p}.
  \]
  Let next $C_1$ be the constant from Korn's inequality \eqref{eq:kornZ}
  while $C_2$ should denote the constant from inequality \eqref{eq:help3}.
  Using decomposition $u=u_{Z}+u_{\Zperp}$ and $\norm{u}^2_1\leq 2(\norm{u_{\Zperp}}^2_1+\norm{u_Z}^2_1)$
  the coercivity of $A+Y$ on $V$ now follows
  \[
    \begin{split}
  \brack{(A+Y) u, u}&=2\mu(\epsilon(u), \epsilon(u)) + (u_Z, u_Z)=
  2\mu(\epsilon(u_{\Zperp}), \epsilon(u_{\Zperp})) + (u_Z, u_Z)\\
  &\geq
  2\mu C_1\norm{u_{\Zperp}}^2_1 + C_2^{-2}\norm{u_{z}}_1^2
  \geq\frac{1}{2}\min{(2\mu C_1, C_2^{-2})}\norm{u}^2_1.
  \end{split}
\]

To verify that $u\in \Zperp$ the equation \eqref{eq:mixed_pcg} is applied
to pair $z, 0$, where $z\in Z$ is arbitrary, yielding $(u_Z, z)=\brack{l, z}=0$
as $l$ is in the polar set of $Z$. From $u\in \Zperp$ it follows that the last
equation in \eqref{eq:robust_weak} holds while with $\nu=0$ the first two
equations to be satisfied by $u, p$ are precisely \eqref{eq:mixed_pcg}.
This verifies the final statement from the theorem.
\end{proof}

Using equivalence of norms shown in Lemma \ref{lm:inners} and operator
preconditioning the preconditioner for the well-posed problem \eqref{eq:mixed_pcg}
is chosen as
\begin{equation}
  \mathcal{B}=
    \inv{
  \begin{pmatrix}
    A+M &    \\
        & C  \\
   \end{pmatrix}
   }
\end{equation}
with $M$ defined in \eqref{eq:Vinner}.

To show that $\mathcal{B}$ defines a parameter robust preconditioner for $\mathcal{A}$
we reuse the experimental setup from the previous section, that is, 
we consider \eqref{eq:robust_weak} with $\mu=1$, $h=0$ and $f=u^{*}$ (see
Example \ref{ex:pin_elasticity}) and $\lambda$  drawn from the interval
$\left[1,10^{15}\right]$. The operators are discretized with the $P_2-P_1$
Taylor-Hood elements which are stable for the problem following Lemma \ref{lm:penalty}.
Note that discretization of operator $A+Y$ in \eqref{eq:mixed_pcg} leads to a dense
matrix, however, similar to \S \ref{sec:energy}, its assembly is not needed
to compute the action. As with the double saddle point problem the action of
the discrete preconditioner is computed with algebraic multigrid while
the system is solved with MinRes method and absolute tolerance $10^{-8}$
for the preconditioned residual norm. We remark that the iterative solver
uses a right hand side orthogonalized with the discrete projector $\transp{\la{P}}$
from \eqref{eq:proj_discrete}, cf. Theorem \ref{thm:mixed_single}.

The results of the experiment are summarized in Table \ref{tab:mixed_dx_pcg}.
We observe that with the proposed preconditioner the iterations are bounded
both in $\lambda$ and the discretization parameter. The table also lists
the error in the orthogonality constraint $(u_h, z)=0\,\forall z\in Z$. With
the chosen convergence criterion the error is about factor 10 larger than for the
double saddle point formulation, cf. Table \ref{tab:mixed_dx}, while on the
finer meshes fewer iterations of the current solver are required for convergence.

\begin{table}[ht!]
\setlength\extrarowheight{1.0pt}
\begin{center}
\caption{Iteration counts of the preconditioned MinRes method for mixed linear
  elasticity problem \eqref{eq:mixed_pcg} and different values of Lam\'{e}
  constant $\lambda$. The exact incompressibility case is denoted by
  $\lambda=\infty$. The iteration counts remain bounded for the considered
  values of the parameter.
}
\footnotesize{
\begin{tabular}{ll|cccccc|c}
\hline
\multirow{2}{*}{$\text{dim}(V)$} & \multirow{2}{*}{$\text{dim}(Q)$} 
                                 & \multicolumn{6}{c|}{$\lambda$}
                                 & \multirow{2}{*}{$\max_{Z, \lambda}\semi{(u_h,z)}$}\\
  \cline{3-8}
                   && $10^{0}$ & $10^{4}$ & $10^{8}$ & $10^{12}$ & $10^{15}$ & $\infty$ &\\
\hline
14739   & 729    & 80 & 89 & 103 & 97 & 97 & 104 & 2.59E-05 \\
107811  & 4913   & 60 & 91 & 94 & 93 & 93 & 92   & 8.79E-05 \\
823875  & 35937  & 48 & 66 & 75 & 69 & 71 & 66   & 4.42E-04 \\
6440067 & 274625 & 36 & 49 & 50 & 52 & 50 & 50   & 5.35E-04 \\
\hline
\end{tabular}
}
\label{tab:mixed_dx_pcg}
\end{center}
\end{table}

\section{Conclusions}\label{sec:fin}
We have studied the singular Neumann problem of linear elasticity. Five
different formulations of the problem have been analyzed and mesh independent
preconditioners established for the resulting linear systems within 
the framework of operator preconditioning. We have proposed a preconditioner 
for the (singular) mixed formulation of linear elasticity, that is robust with 
respect to the material parameters. Using an orthonormal basis 
of the space of rigid motions, discrete projection operators have been derived 
and employed in a modification to the conjugate gradient method to ensure 
optimal error convergence of the solution.  
\appendix\section{Eigenvalue bounds for Lagrange multiplier preconditioners}\label{sec:appendix}
Bounds for the eigenvalues of operators $\mathcal{B}_E\mathcal{A}$ and
$\mathcal{B}_M\mathcal{A}$ from \eqref{eq:weak} and \eqref{eq:BEM} are 
approximated by considering the eigenvalue
problems
\begin{equation}\label{eq:eigenvalues}
  \begin{pmatrix}
  \la{A}          & \la{B}\\
 \transp{\la{B}}  &
  \end{pmatrix}\begin{pmatrix}\la{u}\\
                            \la{p}\end{pmatrix}=
                            \lambda
  \inv{\la{B_i}}\begin{pmatrix}\la{u}\\
                            \la{p}\end{pmatrix}
\end{equation}
with the left hand side the discretization of \eqref{eq:weak} and $\la{B_i}$,
$i\in\set{E, M}$ discretizations of preconditioners $\mathcal{B}_i$ from
\eqref{eq:BEM}. The spectrum of the symmetric, indefinite problem \eqref{eq:eigenvalues} 
is a union of negative and positive intervals $[\lambda^{-}_{\text{min}}, 
\lambda^{-}_{\text{max}}]$, $[\lambda^{+}_{\text{min}}, \lambda^{+}_{\text{max}}]$. 
Following the analysis in Theorems \ref{thm:e_norm} and \ref{thm:m_norm} negative 
bounds equal to -1 are expected for both preconditioners. Further, the positive 
eigenvalues are bounded from above by 1. 
Finally, $\lambda^{+}_{\text{min}}=-1$ for $\mathcal{B}_E$ while the constant $C=C(\Omega)$ 
from the Korn's inequality determines the bound for $\mathcal{B}_M$.

In the experiment, $\Omega$ as a cube from Example \ref{ex:pin_elasticity} and a
hollow cylinder with inner and outer radii $\tfrac{1}{2}$, $1$ and height $2$ are 
considered. Lam\'{e} constants $\mu=384$, $\lambda=577$ are used. For both bodies 
$C\approx 1$ is observed, cf. Table \ref{tab:eigenvalues}. The remaining bounds 
agree well with the analysis.

\begin{table}[ht!]
  \begin{center}
  \caption{
    Spectral bounds for eigenvalue problems \eqref{eq:eigenvalues}. (Top) The
    body is cube. (Bottom) The body is a cylinder.
  }
\footnotesize{
\begin{tabular}{c|l|ccccc}
\hline
  &  size & $\kappa$ & $\lambda^{-}_{\text{min}}+1$ & $\lambda^{-}_{\text{max}}+1$ &
                    $\lambda^{+}_{\text{min}}-1$ & $\lambda^{+}_{\text{max}}-1$\\
\hline
  \multirow{4}{*}{\rotatebox[origin=c]{90}{$\la{B}_E$}} & 87    & 1.0000 & -6.83E-11 & 2.92E-11 & -4.36E-11 & 5.89E-12\\
  &381   & 1.0000 & -1.38E-10 & 7.00E-12 & -1.61E-10 & 5.55E-15\\
  &2193  & 1.0000 & -5.88E-10 & 1.65E-11 & -6.23E-10 & 9.55E-15\\
  &14745 & 1.0000 & -1.10E-08 & -4.27E-09 & -2.00E-08 & 1.73E-14\\
\hline
  \multirow{4}{*}{\rotatebox[origin=c]{90}{$\la{B}_M$}}&87    & 1.0001 & -6.64E-11 & 4.46E-12 & -1.10E-04 & 1.03E-11\\
  &381   & 1.0002 & -1.35E-10 & -1.06E-11 & -2.33E-04 & -5.33E-12\\
  &2193  & 1.0004 & -5.73E-10 & -1.12E-11 & -4.00E-04 & 5.91E-12\\
  &14745 & 1.0005 & -2.37E-09 & -7.73E-11 & -4.97E-04 & -4.47E-11\\
\hline
\hline
  \multirow{4}{*}{\rotatebox[origin=c]{90}{$\la{B}_E$}}&210  & 1.0000 & -3.91E-12 & -4.46E-13 & -4.58E-12 & 9.33E-15\\
  &462  & 1.0000 & -3.82E-12 & -8.91E-13 & -4.55E-12 & 5.77E-15\\
  &1764 & 1.0000 & -9.32E-12 & -4.40E-12 & -1.08E-11 & 1.31E-14\\
  &8292 & 1.0000 & -3.71E-11 & -1.74E-11 & -4.06E-11 & 6.26E-14\\
\hline
  \multirow{4}{*}{\rotatebox[origin=c]{90}{$\la{B}_M$}}&210  & 1.0752 & 1.84E-02 & 7.00E-02 & -7.00E-02 & -2.57E-06\\
  &462  & 1.0219 & 1.94E-03 & 2.14E-02 & -2.14E-02 & -2.21E-06\\
  &1764 & 1.0069 & 1.14E-03 & 6.82E-03 & -6.82E-03 & -4.57E-07\\
  &8292 & 1.0022 & 1.60E-04 & 1.66E-03 & -2.17E-03 & -2.10E-08\\
\hline
\end{tabular}
}
\label{tab:eigenvalues}
\end{center}
\end{table}

\bibliographystyle{siam}
\bibliography{rm_paper}

\begin{thebibliography}{10}

\bibitem{DNV}
{\em Finite element analysis, {DNVGL--CG--0127}}, tech. report, Det Norske
  Veritas GL, 10 2015.

\bibitem{fenics}
{\sc M.~Aln{\ae}s, J.~Blechta, J.~Hake, A.~Johansson, B.~Kehlet, A.~Logg,
  C.~Richardson, J.~Ring, M~Rognes, and G.~Wells}, {\em The {F}enics project
  version 1.5}, Archive of Numerical Software, 3 (2015).

\bibitem{arnold}
{\sc D.N. Arnold, F.~Brezzi, and M.~Fortin}, {\em A stable finite element for
  the {S}tokes equations}, Calcolo, 21 (1984), pp.~337--344.

\bibitem{baker2010improving}
{\sc Allison~H Baker, Tz~V Kolev, and Ulrike~Meier Yang}, {\em Improving
  algebraic multigrid interpolation operators for linear elasticity problems},
  Numerical Linear Algebra with Applications, 17 (2010), pp.~495--517.

\bibitem{petsc}
{\sc S.~Balay, J.~Brown, K.~Buschelman, V.~Eijkhout, W.~D. Gropp, D.~Kaushik,
  M.~G. Knepley, L.~C. McInnes, B.~F. Smith, and H.~Zhang}, {\em {PETS}c users
  manual}, Tech. Report ANL-95/11 - Revision 3.4, Argonne National Laboratory,
  2013.

\bibitem{Bauchau2009}
{\sc O.~A. Bauchau and J.~I. Craig}, {\em Basic equations of linear
  elasticity}, Springer Netherlands, Dordrecht, 2009, pp.~3--51.

\bibitem{liesen}
{\sc M.~Benzi, G.~H. Golub, and J.~Liesen}, {\em Numerical solution of saddle
  point problems}, ACTA NUMERICA, 14 (2005), pp.~1--137.

\bibitem{bergh}
{\sc J.~Bergh and J.~L{\"o}fstr{\"o}m}, {\em Interpolation spaces: an
  introduction}, Grundlehren der mathematischen Wissenschaften, Springer, 1976.

\bibitem{bochev}
{\sc P.~Bochev and R.~B. Lehoucq}, {\em On the finite element solution of the
  pure {N}eumann problem}, SIAM review, 47 (2005), pp.~50--66.

\bibitem{braess}
{\sc D.~Braess}, {\em Finite Elements: Theory, Fast Solvers, and Applications
  in Solid Mechanics}, Cambridge University Press, 2001.

\bibitem{brenner}
{\sc S.~Brenner and R.~Scott}, {\em The Mathematical Theory of Finite Element
  Methods}, Texts in Applied Mathematics, Springer New York, 2007.

\bibitem{brezina2001algebraic}
{\sc Marian Brezina, Andrew~J Cleary, Robert~D Falgout, Van~Enden Henson, Jim~E
  Jones, Thomas~A Manteuffel, Stephen~F McCormick, and John~W Ruge}, {\em
  Algebraic multigrid based on element interpolation {(AMGe)}}, SIAM Journal on
  Scientific Computing, 22 (2001), pp.~1570--1592.

\bibitem{brezzi}
{\sc F.~Brezzi}, {\em On the existence, uniqueness and approximation of
  saddle-point problems arising from {L}agrangian multipliers}, Revue
  fran{\c{c}}aise d'automatique, informatique, recherche op{\'e}rationnelle.
  Analyse num{\'e}rique, 8 (1974), pp.~129--151.

\bibitem{ciarlet}
{\sc P.~G. Ciarlet}, {\em On {K}orn's inequality}, Chinese Annals of
  Mathematics, Series B, 31 (2010), pp.~607--618.

\bibitem{brain}
{\sc T.~Dutta-Roy, A.~Wittek, and K.~Miller}, {\em Biomechanical modelling of
  normal pressure hydrocephalus}, Journal of Biomechanics, 41 (2008), pp.~2263
  -- 2271.

\bibitem{hypre}
{\sc R.~D. Falgout and U.~Meier~Yang}, {\em hypre: A library of high
  performance preconditioners}, in Computational Science — ICCS 2002,
  P.~M.~A. Sloot, A.~G. Hoekstra, C.~J.~K. Tan, and J.~J. Dongarra, eds.,
  vol.~2331 of Lecture Notes in Computer Science, Springer Berlin Heidelberg,
  2002, pp.~632--641.

\bibitem{farhat1991method}
{\sc Charbel Farhat and Francois-Xavier Roux}, {\em A method of finite element
  tearing and interconnecting and its parallel solution algorithm},
  International Journal for Numerical Methods in Engineering, 32 (1991),
  pp.~1205--1227.

\bibitem{fortin}
{\sc M.~Fortin}, {\em An analysis of the convergence of mixed finite element
  methods}, ESAIM: Mathematical Modelling and Numerical Analysis -
  Modélisation Mathématique et Analyse Numérique, 11 (1977), pp.~341--354.

\bibitem{fortin2010iterative}
{\sc Michel Fortin, Nicolas Tardieu, Andr{\'e} Fortin, et~al.}, {\em Iterative
  solvers for 3{D} linear and nonlinear elasticity problems: {D}isplacement and
  mixed formulations}, International Journal for Numerical Methods in
  Engineering, 83 (2010), pp.~1780--1802.

\bibitem{greenbaum1997iterative}
{\sc A.~Greenbaum}, {\em Iterative Methods for Solving Linear Systems},
  Frontiers in Applied Mathematics, Society for Industrial and Applied
  Mathematics, 1997.

\bibitem{griebel2003algebraic}
{\sc Michael Griebel, Daniel Oeltz, and Marc~Alexander Schweitzer}, {\em An
  algebraic multigrid method for linear elasticity}, SIAM Journal on Scientific
  Computing, 25 (2003), pp.~385--407.

\bibitem{gurtin}
{\sc M.E. Gurtin}, {\em An Introduction to Continuum Mechanics}, Mathematics in
  Science and Engineering, Elsevier Science, 1982.

\bibitem{henson2001element}
{\sc Van~Emden Henson and Panayot~S Vassilevski}, {\em Element-free {AMGe}:
  {G}eneral algorithms for computing interpolation weights in {AMG}}, SIAM
  Journal on Scientific Computing, 23 (2001), pp.~629--650.

\bibitem{hong2016robust}
{\sc Qingguo Hong, Johannes Kraus, Jinchao Xu, and Ludmil Zikatanov}, {\em A
  robust multigrid method for discontinuous {G}alerkin discretizations of
  {S}tokes and linear elasticity equations}, Numerische Mathematik, 132 (2016),
  pp.~23--49.

\bibitem{jones2001amge}
{\sc Jim~E Jones and Panayot~S Vassilevski}, {\em {AMGe} based on element
  agglomeration}, SIAM Journal on Scientific Computing, 23 (2001),
  pp.~109--133.

\bibitem{karer2010algebraic}
{\sc E~Karer and JK~Kraus}, {\em Algebraic multigrid for finite element
  elasticity equations: {D}etermination of nodal dependence via edge-matrices
  and two-level convergence}, International Journal for Numerical Methods in
  Engineering, 83 (2010), pp.~642--670.

\bibitem{klawonn_block}
{\sc A.~Klawonn}, {\em Block-triangular preconditioners for saddle point
  problems with a penalty term}, SIAM Journal on Scientific Computing, 19
  (1998), pp.~172--184.

\bibitem{klawonn_optimal}
\leavevmode\vrule height 2pt depth -1.6pt width 23pt, {\em An optimal
  preconditioner for a class of saddle point problems with a penalty term},
  SIAM Journal on Scientific Computing, 19 (1998), pp.~540--552.

\bibitem{kraus2008algebraic}
{\sc JK~Kraus}, {\em Algebraic multigrid based on computational molecules, 2:
  {L}inear elasticity problems}, SIAM Journal on Scientific Computing, 30
  (2008), pp.~505--524.

\bibitem{mekit_rm}
{\sc M.~Kuchta, K.-A. Mardal, and M.~Mortensen}, {\em Characterisation of the
  space of rigid motions in arbitrary domains}, in Proc.\ of 8th National
  Conference on Computational Mechanics, Barcelona, Spain, 2015, CIMNE.

\bibitem{lanczos}
{\sc C.~Lanczos}, {\em Linear Differential Operators}, Dover books on
  mathematics, Dover Publications, 1997.

\bibitem{liesen_tichy}
{\sc J.~Liesen and P.~Tich{\'y}}, {\em Convergence analysis of {K}rylov
  subspace methods}, GAMM-Mitteilungen, 27 (2004), pp.~153--173.

\bibitem{fenics_old}
{\sc A.~Logg, K.-A. Mardal, and G.~N. Wells}, {\em {Automated Solution of
  Differential Equations by the Finite Element Method:} {The FEniCS Book}},
  vol.~84, Springer Science \& Business Media, 2012.

\bibitem{malek}
{\sc J.~M{\'{a}}lek and Z.~Strako{\v{s}}}, {\em Preconditioning and the
  Conjugate Gradient Method in the Context of Solving {PDE}s}, Society for
  Industrial and Applied Mathematics, Philadelphia, PA, 2014.

\bibitem{mandel1999energy}
{\sc Jan Mandel, Marian Brezina, and Petr Van{\v{e}}k}, {\em Energy
  optimization of algebraic multigrid bases}, Computing, 62 (1999),
  pp.~205--228.

\bibitem{block}
{\sc K.-A. Mardal and J.~B. Haga}, {\em Block preconditioning of systems of
  {PDE}s}, in Automated Solution of Differential Equations by the Finite
  Element Method, A.~Logg, K.-A. Mardal, and G.~N. et~al. Wells, eds.,
  Springer, 2012.

\bibitem{mardal2002robust}
{\sc K.-A. Mardal, X.-C. Tai, and R.~Winther}, {\em A robust finite element
  method for {D}arcy--{S}tokes flow}, SIAM Journal on Numerical Analysis, 40
  (2002), pp.~1605--1631.

\bibitem{mardal2006observation}
{\sc K.-A. Mardal and R.~Winther}, {\em An observation on {K}orn{'}s inequality
  for nonconforming finite element methods}, Mathematics of computation, 75
  (2006), pp.~1--6.

\bibitem{kent}
\leavevmode\vrule height 2pt depth -1.6pt width 23pt, {\em Preconditioning
  discretizations of systems of partial differential equations}, Numerical
  Linear Algebra with Applications, 18 (2011), pp.~1--40.

\bibitem{marsden}
{\sc J.E. Marsden and T.J.R. Hughes}, {\em Mathematical Foundations of
  Elasticity}, Dover Civil and Mechanical Engineering Series, Dover, 1994.

\bibitem{minres}
{\sc C.~C. Paige and M.~A. Saunders}, {\em Solution of sparse indefinite
  systems of linear equations}, SIAM Journal on Numerical Analysis, 12 (1975),
  pp.~617--629.

\bibitem{penrose}
{\sc R.~Penrose}, {\em A generalized inverse for matrices}, Mathematical
  Proceedings of the Cambridge Philosophical Society, 51 (2008), p.~406–413.

\bibitem{ruge1987algebraic}
{\sc John~W Ruge and Klaus St{\"u}ben}, {\em Algebraic multigrid}, in Multigrid
  methods, SIAM, 1987, pp.~73--130.

\bibitem{schoberl1999multigrid}
{\sc Joachim Sch{\"o}berl}, {\em Multigrid methods for a parameter dependent
  problem in primal variables}, Numerische Mathematik, 84 (1999), pp.~97--119.

\bibitem{shewchuk}
{\sc Jonathan~Richard Shewchuk}, {\em An introduction to the conjugate gradient
  method without the agonizing pain}, 1994.

\bibitem{spine}
{\sc K.H. St{\o}verud, M.~Aln{\ae}s, H.P. Langtangen, V.~Haughton, and K.-A.
  Mardal}, {\em Poro-elastic modeling of {S}yringomyelia {–} a systematic
  study of the effects of pia mater, central canal, median fissure, white and
  gray matter on pressure wave propagation and fluid movement within the
  cervical spinal cord}, Computer Methods in Biomechanics and Biomedical
  Engineering, 19 (2016), pp.~686--698.

\bibitem{OC}
{\sc G.~Tobie, O.~\v{C}adek, and C.~Sotin}, {\em Solid tidal friction above a
  liquid water reservoir as the origin of the south pole hotspot on
  {E}nceladus}, Icarus, 196 (2008), pp.~642 -- 652.
\newblock Mars Polar Science IV.

\bibitem{trefethen}
{\sc L.~N. Trefethen and D.~Bau}, {\em Numerical Linear Algebra}, Society for
  Industrial and Applied Mathematics, 1997.

\bibitem{vanek1999two}
{\sc Petr Vanek, Marian Brezina, and Radek Tezaur}, {\em Two-grid method for
  linear elasticity on unstructured meshes}, SIAM Journal on Scientific
  Computing, 21 (1999), pp.~900--923.

\bibitem{vanvek1996algebraic}
{\sc Petr Van{\v{e}}k, Jan Mandel, and Marian Brezina}, {\em Algebraic
  multigrid by smoothed aggregation for second and fourth order elliptic
  problems}, Computing, 56 (1996), pp.~179--196.

\bibitem{vassilevski2006multiple}
{\sc Panayot~S Vassilevski and Ludmil~T Zikatanov}, {\em Multiple vector
  preserving interpolation mappings in algebraic multigrid}, SIAM journal on
  matrix analysis and applications, 27 (2006), pp.~1040--1055.

\end{thebibliography}
\end{document}